\documentclass[11pt]{amsart}
\usepackage[english]{babel}

\usepackage[all]{xy}
\usepackage{amssymb,latexsym}
\usepackage{amssymb}
\usepackage{latexsym}
\usepackage{graphicx}
\usepackage{amssymb}
\usepackage{amsfonts}
\usepackage{amsmath}
\usepackage{latexsym}
\usepackage{amsthm}
\usepackage{xypic}
\usepackage{mathrsfs}
\usepackage{stmaryrd}
\usepackage{enumerate}
\usepackage{psfrag}

\setlength{\hoffset}{-2cm}
\setlength{\textwidth}{16cm}

\begin{document}

\title{Closed and transitive transformation groups of a surface}

\author{Ferry H. Kwakkel}

\email{ferrykwakkel@gmail.com}

\keywords{transformation groups, Lie groups acting on a surface, surface homeomorphisms}
\subjclass[2010]{22F05 (primary), and 51H20 (secondary)}

\begin{abstract}
The purpose of this paper is to survey the structure of closed and transitive transformation groups acting on a closed surface.
In particular, we prove a number of relations between groups acting on the sphere that contain the rotation group, 
together with a diagram of how these groups are connected. In addition, we describe transformation groups
acting on the torus and higher genus surfaces. 
\end{abstract}

\maketitle

\newtheorem{thm}{Theorem}
\newtheorem{cor}[thm]{Corollary}
\newtheorem{counter}[thm]{Counterexample}

\newtheorem*{thma}{Theorem A}
\newtheorem*{thmb}{Theorem B}
\newtheorem*{thmc}{Theorem C}
\newtheorem*{thm1}{Theorem}
\newtheorem*{problem}{Problem}
\newtheorem*{problemb}{Problem B}

\newtheorem*{mostow}{Classification of Lie groups}

\newtheorem*{conj}{Conjecture}

\newtheorem*{conja}{Conjecture A}
\newtheorem*{conjb}{Conjecture B}

\newtheorem*{kernel}{Problem}
\newtheorem*{kernelconj}{Kernel subgroup conjecture}

\newtheorem*{antlambda}{Ant Lambda Conjecture}

\theoremstyle{plain}
\newtheorem{lem}{Lemma}[section]
\newtheorem{prop}{Proposition}[section]
\newtheorem*{propo}{Proposition}
\newtheorem*{coro}{Corollary}

\theoremstyle{definition}
\newtheorem*{defn}{Definition}
\newtheorem*{rem}{Remark}
\newtheorem*{notation}{Notation}
\newtheorem{exc}[thm]{Exercise}
\newtheorem{exe}[thm]{Exercise}
\newtheorem{open}[thm]{Open Problem}

\newcommand{\Mobius}{\tu{M\"obius}}
\newcommand{\Ant}{\tu{Ant}}
\newcommand{\RP}{\R\mathbb{P}^2}
\newcommand{\Lin}{\tu{Lin}}
\newcommand{\Fin}{\tu{Fin}}

\newcommand{\Rotation}{\tu{Rotation}}
\newcommand{\MCG}{\tu{MCG}}
\newcommand{\Trans}{\tu{Trans}}
\newcommand{\Torelli}{\tu{Torelli}}

\newcommand{\Skew}{\tu{Skew}}
\newcommand{\Prod}{\tu{Prod}}

\newcommand{\trace}{\tu{trace}}

\numberwithin{equation}{section}
\renewcommand{\Im}{\operatorname{Im}}

\newcommand{\di}{\partial}
\newcommand{\dibar}{\bar\partial}
\newcommand{\hookra}{\hookrightarrow}
\newcommand{\ra}{\rightarrow}
\newcommand{\hra}{\hookrightarrow}
\newcommand{\imply}{\Rightarrow}
\def\lra{\longrightarrow}
\newcommand{\wc}{\underset{w}{\to}}
\newcommand{\tu}{\textup}

\newcommand{\diam}{\operatorname{diam}}

\newcommand{\stab}{\operatorname{Stab}}
\newcommand{\centr}{\operatorname{Centr}}

\newcommand{\dist}{\operatorname{dist}}
\newcommand{\Hdist}{\operatorname{H-dist}}
\newcommand{\cl}{\operatorname{cl}}
\newcommand{\inter}{\operatorname{int}}
\renewcommand{\mod}{\operatorname{mod}}
\newcommand{\card}{\operatorname{card}}
\newcommand{\tl}{\tilde}
\newcommand{\ind}{ \operatorname{ind} }
\newcommand{\Dist}{\operatorname{Dist}}
\newcommand{\len}{\operatorname{\l}}

\newcommand{\ctg}{\operatorname{ctg}}
\newcommand{\arcctg}{\operatorname{arcctg}}
\newcommand{\orb}{\operatorname{orb}}
\newcommand{\HD}{\operatorname{HD}}
\newcommand{\supp}{\operatorname{supp}}
\newcommand{\id}{\operatorname{id}}
\newcommand{\length}{\operatorname{length}}
\newcommand{\area}{\operatorname{area}}
\newcommand{\dens}{\operatorname{dens}}
\newcommand{\meas}{\operatorname{meas}}
\newcommand{\distM}{\operatorname{dist}_{Mon}} 
\newcommand{\per}{\operatorname{per}}
\newcommand{\Id}{\operatorname{Id}}

\newcommand{\PG}{\mathfrak{P}}

\renewcommand{\d}{{\diamond}}

\newcommand{\Dil}{\operatorname{Dil}}
\newcommand{\Ker}{\operatorname{Ker}}
\newcommand{\tg}{\operatorname{tg}}
\newcommand{\codim}{\operatorname{codim}}
\newcommand{\isom}{\approx}
\newcommand{\comp}{\circ}
\newcommand{\esssup}{\operatorname{ess-sup}}
\newcommand{\diff}{\operatorname{Diff}}

\newcommand{\SLa}{\underset{\La}{\Subset}}

\newcommand{\const}{\mathrm{const}}
\def\loc{{\mathrm{loc}}}
\def\fib{{\mathrm{fib}}}

\newcommand{\eps}{{\varepsilon}}
\newcommand{\epsi}{{\epsilon}}
\newcommand{\veps}{{\varepsilon}}
\newcommand{\De}{{\Delta}}
\newcommand{\de}{{\delta}}
\newcommand{\la}{{\lambda}}
\newcommand{\La}{{\Lambda}}
\newcommand{\si}{{\sigma}}
\newcommand{\Si}{{\Sigma}}
\newcommand{\Om}{{\Omega}}
\newcommand{\om}{{\omega}}

\newcommand{\al}{{\alpha}}
\newcommand{\ba}{{\mbox{\boldmath$\alpha$} }}
\newcommand{\bbe}{{\mbox{\boldmath$\beta$} }}
\newcommand{\bk}{{\boldsymbol{\kappa}}}
\newcommand{\bg}{{\boldsymbol{\gamma}}}

\newcommand{\bare}{{\bar\eps}}

\newcommand{\Ray}{{\mathcal R}}
\newcommand{\Eq}{{\mathcal E}}
\newcommand{\PR}{PR}

\newcommand{\AAA}{{\mathcal A}}
\newcommand{\BB}{{\mathcal B}}
\newcommand{\CC}{{\mathcal C}}
\newcommand{\DD}{{\mathcal D}}
\newcommand{\EE}{{\mathcal E}}
\newcommand{\EEE}{{\mathcal O}}
\newcommand{\II}{{\mathcal I}}
\newcommand{\FF}{{\mathcal F}}
\newcommand{\GG}{{\mathcal G}}
\newcommand{\JJ}{{\mathcal J}}
\newcommand{\HH}{{\mathcal H}}
\newcommand{\KK}{{\mathcal K}}
\newcommand{\LL}{{\mathcal L}}
\newcommand{\MM}{{\mathcal M}}
\newcommand{\NN}{{\mathcal N}}
\newcommand{\OO}{{\mathcal O}}
\newcommand{\PP}{{\mathcal P}}

\newcommand{\PPP}{{\mathbb P}}

\newcommand{\QQ}{{\mathcal Q}}
\newcommand{\QM}{{\mathcal QM}}
\newcommand{\QP}{{\mathcal QP}}
\newcommand{\QL}{{\mathcal Q}}

\newcommand{\RR}{{\mathcal R}}
\renewcommand{\SS}{{\mathcal S}}
\newcommand{\TT}{{\mathcal T}}
\newcommand{\TTT}{{\mathcal P}}
\newcommand{\UU}{{\mathcal U}}
\newcommand{\VV}{{\mathcal V}}
\newcommand{\WW}{{\mathcal W}}
\newcommand{\XX}{{\mathcal X}}
\newcommand{\YY}{{\mathcal Y}}
\newcommand{\ZZ}{{\mathcal Z}}

\newcommand{\A}{{\mathbb A}}
\newcommand{\C}{{\mathbb C}}
\newcommand{\bC}{{\bar{\mathbb C}}}
\newcommand{\D}{{\mathbb D}}
\newcommand{\Hyp}{{\mathbb H}}
\newcommand{\J}{{\mathbb J}}
\newcommand{\Ll}{{\mathbb L}}
\renewcommand{\L}{{\mathbb L}}
\newcommand{\M}{{\mathbb M}}
\newcommand{\N}{{\mathbb N}}
\newcommand{\Q}{{\mathbb Q}}
\newcommand{\R}{{\mathbb R}}
\newcommand{\T}{{\mathbb T}}
\newcommand{\V}{{\mathbb V}}
\newcommand{\U}{{\mathbb U}}
\newcommand{\W}{{\mathbb W}}
\newcommand{\X}{{\mathbb X}}
\newcommand{\Z}{{\mathbb Z}}

\newcommand{\VVV}{{\mathbf U}}
\newcommand{\UUU}{{\mathbf U}}

\newcommand{\tT}{{\mathrm{T}}}
\newcommand{\tD}{{D}}
\newcommand{\hyp}{{\mathrm{hyp}}}

\newcommand{\f}{{\bf f}}
\newcommand{\g}{{\bf g}}
\newcommand{\h}{{\bf h}}
\renewcommand{\i}{{\bar i}}
\renewcommand{\j}{{\bar j}}

\renewcommand{\k}{\kappa}

\def\Bf{{\mathbf{f}}}
\def\Bg{{\mathbf{g}}}
\def\BG{{\mathbf{G}}}
\def\Bh{{\mathbf{h}}}
\def\Bv{{\mathbf{v}}}
\def\Bz{{\mathbf{z}}}
\def\Bx{{\mathbf{x}}}
\def\By{{\mathbf{y}}}

\def\BH{{\mathbf{H}}}
\def\BF{{\mathbf{F}}}
\def\BS{{\mathbf{S}}}

\def\BT{{\mathbf{T}}}
\def\Bj{{\mathbf{j}}}
\def\Bphi{{\mathbf{\Phi}}}
\def\BPsi{{\boldsymbol{\Psi}}}
\def\BPhi{{\boldsymbol{\BPhi}}}
\def\B0{{\mathbf{0}}}
\def\BU{{\mathbf{U}}}
\def\BV{{\mathbf{V}}}
\def\BR{{\mathbf{R}}}
\def\BG{{\mathbf{G}}}
\newcommand{\Comb}{{\it Comb}}
\newcommand{\Top}{{\it Top}}
\newcommand{\QC}{\mathcal QC}
\newcommand{\Def}{\mathcal Def}
\newcommand{\Teich}{\mathcal Teich}
\newcommand{\PPL}{{\mathcal P}{\mathcal L}}
\newcommand{\Jac}{\operatorname{Jac}}
\renewcommand{\DH}{\operatorname{DH}}
\newcommand{\Homeo}{\operatorname{Homeo}}
\newcommand{\AC}{\operatorname{AC}}
\newcommand{\Dom}{\operatorname{Dom}}

\newcommand{\Aff}{\operatorname{Aff}}
\newcommand{\Euc}{\operatorname{Euc}}
\newcommand{\MobC}{\operatorname{M\ddot{o}b}({\mathbb C}) }
\newcommand{\PSL}{ {\tu{PSL}} }

\newcommand{\PGL}{ {\tu{PGL}} }
\newcommand{\PSO}{ {\tu{PSO}} }

\newcommand{\Cl}{ {\tu{Cl}} }
\newcommand{\Int}{ {\tu{Int}} }

\newcommand{\SL}{ {\tu{SL}} }
\newcommand{\CP}{ {\mathbb{CP}} }

\newcommand{\Mob}{\textup{M\"ob} }
\newcommand{\Rot}{\textup{Rot} }
\newcommand{\homeo}{\textup{Homeo} }
\newcommand{\diffeo}{\textup{Diffeo} }

\newcommand{\GL}{ \operatorname{GL} }
\newcommand{\SO}{ \operatorname{SO} }

\newcommand{\OOO}{ \operatorname{O} }

\newcommand{\hf}{{\hat f}}
\newcommand{\hz}{{\hat z}}
\newcommand{\hM}{{\hat M}} 

\renewcommand{\lq}{``}
\renewcommand{\rq}{''}


\catcode`\@=12

\def\Empty{}
\newcommand\oplabel[1]{
  \def\OpArg{#1} \ifx \OpArg\Empty {} \else
   \label{#1}
  \fi}

%

\long\def\realfig#1#2#3#4{
\begin{figure}[htbp]
\centerline{\psfig{figure=#2,width=#4}}
\caption[#1]{#3}
\oplabel{#1}
\end{figure}}

\numberwithin{figure}{section}

%

\newcommand{\comm}[1]{}
\newcommand{\comment}[1]{}

\setcounter{tocdepth}{2}

\section{Introduction and statement of results}\label{section_intro}

\subsection{History and definitions}

Let $\homeo(M)$ be the group of orientation preserving homeomorphisms of a closed orientable topological surface $M$, with $M$ either the sphere $S^2$, the torus $\T^2$, or a higher 
genus surface $S$ of genus at least two. The uniform topology defined on $\homeo(M)$, or a subgroup of $\homeo(M)$, is the topology induced by the topological 
distance $d_{C^0}$ between two homeomorphisms $f,g \in \homeo(M)$, defined by
\begin{equation}
d_{C^0}(f,g) = \sup_{p \in M} d(f(p), g(p)) = \max_{p \in M} d(f(p), g(p)),
\end{equation}
with $d(\cdot, \cdot)$ the metric defined on $M$, which is the standard spherical metric inducing the Lebesgue measure in the case of the sphere $S^2$. The topology on $\homeo(M)$ 
is defined to be the uniform topology. A subgroup $G$ of $\homeo(M)$ is said to be {\em closed} if it is a closed subset of $\homeo(M)$ as a topological subspace in the uniform 
topology, and $G$ is said to be {\em transitive} if for any two given $p ,q \in M$, there exists a $h \in G$, such that $h(p) = q$. A closed transitive subgroup is said to be {\em minimal}, 
respectively {\em maximal}, if the group is minimal, respectively maximal, relative to inclusion of subgroups, with respect to the property of being a closed and transitive proper 
subgroup of $\homeo(M)$. Since each such surface admits the group of area-preserving homeomorphisms homotopic to the identity which is a closed and transitive maximal proper subgroup 
of the group of homeomorphisms homotopic to the identity of that surface, see Theorem A below, it follows that for every closed surface $M$, there exists at least one maximal closed 
and transitive subgroup other than the full group of homeomorphisms of $M$ homotopic to the identity. The general problem presented is to classify the closed 
and transitive subgroups, modulo conjugation, of the homeomorphism group $\homeo(M)$ of a closed surface and find its maximal subgroups. This problem relates to 
the classification of Lie group actions on a closed surface, initiated by classical works of Lie~\cite{lie} and further classified by Mostow~\cite{mostow}. Classifying closed and 
transitive subgroups of the homeomorphism group of a closed manifold is a particular problem in the study of general homeomorphism groups of manifolds, see~\cite{fisher} by Fisher.

In dimension one, subgroups of the homeomorphism group of the circle have been studied by Ghys~\cite{ghys}, and in particular closed and transitive subgroups of the circle 
by Giblin-Markovic~\cite{MG}, answering an interesting question posed in~\cite{ghys}. The results in~\cite{MG} include that a closed and transitive subgroup of $\homeo(\mathbb{S}^1)$, 
with $\mathbb{S}^1$ the circle, that properly extends the rotation group $\Rot(\mathbb{S}^1)$ is either the group $\Mobius(\mathbb{S}^1)$ of M\"obius transformations, or one of its cyclic 
covers, or the full homeomorphism group, and one of its cyclic covers. In particular, the group $\Mobius(\mathbb{S}^1)$ acting on the circle is a maximal subgroup in $\homeo(\mathbb{S}^1)$. 

\subsection{Statement of results}

In what follows, denote $\R\PPP^2$ the projective plane, $\Lin(\R\PPP^2)$ the group of projective mappings, $\Ant(S^2)$ the centralizer of $\homeo(S^2)$ with respect to the 
antipodal action on the sphere, $\Mobius(S^2)$ the group of M\"obius transformations, and $\Homeo_{\lambda}(S^2) \subset \homeo(S^2)$ and $\Ant_{\lambda}(S^2) \subset \Ant(S^2)$ 
the corresponding subgroups of area-preserving homeomorphisms. A closed subgroup $G \subseteq \homeo(S^2)$, with $S^2$ the two-sphere, that properly extends the rotation 
group is called a {\em homogeneous transformation group}. We present the following diagram of homogeneous groups.

\begin{displaymath} 
\xymatrix{ 
& ~ & \Mobius(S^2) \ar@/^/[ddr] & ~   \\
& ~ & \homeo_{\lambda}(S^2) \ar@/^/[dr]_{\star} & ~ \\
\Rot(S^2) \ar@/^/[urr] \ar@/_/[drr] \ar@/^/[uurr]^\star \ar@/_/[ddrr]_\star \ar@//[r] & \Ant_{\lambda}(S^2) 
\ar@//[rr]^{\star} \ar@/^/[ur] \ar@/_/[dr]^{\star} & ~ & \Homeo(S^2) \\
& ~ & \Ant(S^2) \ar@/_/[ur]^{\star} \\
& ~ & \Lin(S^2) \ar@//[u] \ar@/_/[uur] & ~  \\ }
\end{displaymath}

Each proper subgroup $G \subset \homeo(S^2)$ appearing in the diagram is a group satisfying (a combination of) symmetry conditions, with the symmetry conditions 
being (i) preserving circles ($\Mobius(S^2)$), (ii) preserving geodesics ($\Lin(S^2)$), (iii) preserving the antipodal action $(\Ant(S^2)$) and (iv) preserving 
area ($\homeo_{\lambda}(S^2)$). For example, the group $\Rot(S^2)$ preserves all these symmetries and the group $\Ant_{\lambda}(S^2)$ is the group that satisfies both symmetries (iii) 
and (iv), but not (i) and (ii). An arrow $G \lra H$ between two groups $G, H$ in the diagram is defined to be {\em complete} if $G$ is maximal in $H$, in the sense that any closed 
group $G \subset K \subseteq H$ that has the property that all the symmetries that $G$ has, but that $H$ does not have, are not respected by $K$, then it holds that $K = H$. 
We prove the following set of implications in terms of the subgroups in the diagram.

\begin{thma}
Given the homogeneous transformation groups $G \subseteq \homeo(S^2)$ in the diagram, each arrow marked with $\star$ is complete and conversely each arrow not present 
between two groups in the diagram, other than $\Rot(S^2) \lra \homeo(S^2)$, corresponds to an intersection of groups already present in the diagram.
\end{thma}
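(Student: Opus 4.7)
The plan is to split Theorem A into its two independent assertions---completeness of each starred arrow, and the observation that every absent arrow (other than $\Rot(S^2) \to \Homeo(S^2)$) reduces to an intersection of groups already present---and attack these in turn.

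For the completeness assertion I would group the starred arrows by whether the ambient group is a finite-dimensional Lie group. The arrows $\Rot(S^2) \to \Mobius(S^2)$ and $\Rot(S^2) \to \Lin(S^2)$ are inclusions of real Lie groups, namely $\SO(3) \hookrightarrow \PSL(2,\C)$ and $\SO(3) \hookrightarrow \PGL(3,\R)$ respectively. For these my plan is to use the Iwasawa decomposition together with the maximality of $\SO(3)$ as a compact connected subgroup: any closed connected intermediate subgroup must contain a non-compact one-parameter subgroup whose infinitesimal generator, together with $\mathfrak{so}(3)$, spans the full ambient Lie algebra, and the symmetry hypothesis rules out non-connected intermediate closed subgroups. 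For the starred arrows involving infinite-dimensional groups, $\Ant_\lambda(S^2) \to \Ant(S^2)$, $\homeo_\lambda(S^2) \to \Homeo(S^2)$, $\Ant(S^2) \to \Homeo(S^2)$, and $\Ant_\lambda(S^2) \to \Homeo(S^2)$, I would rely on the Oxtoby--Ulam theorem---any good Borel probability measure on $S^2$ is the pushforward of Lebesgue measure $\lambda$ by some homeomorphism---and on the analogous classification of free orientation-reversing involutions up to topological conjugacy. Given a closed $K$ strictly containing, say, $\homeo_\lambda(S^2)$, and an element $f \in K$ with $f_\ast \lambda \neq \lambda$, one combines $f$ with conjugations by elements of $\homeo_\lambda(S^2)$ and uses closure in the uniform topology to show that the orbit of $\lambda$ under $K$ consists of all good measures, forcing $K = \Homeo(S^2)$.

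For the second assertion, I would compute the key intersections directly. The essential identities are
\[
\Mobius(S^2) \cap \homeo_\lambda(S^2) = \Mobius(S^2) \cap \Ant(S^2) = \Mobius(S^2) \cap \Lin(S^2) = \Rot(S^2),
\]
each a consequence of the fact that a M\"obius transformation---that is, a conformal map for the round metric---which additionally preserves area, or commutes with the antipodal map, or sends great circles to great circles, must be a round isometry; together with
\[
\homeo_\lambda(S^2) \cap \Ant(S^2) = \Ant_\lambda(S^2) \quad \text{and} \quad \Lin(S^2) \cap \homeo_\lambda(S^2) = \Rot(S^2),
\]
the first immediate from definitions and the second because a projective transformation preserving area is a round isometry. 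These identifications account for every absent arrow in the diagram.

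The main obstacle I anticipate is the completeness of the infinite-dimensional arrows $\homeo_\lambda(S^2) \to \Homeo(S^2)$ and $\Ant(S^2) \to \Homeo(S^2)$. In contrast with the Lie-theoretic cases, no dimension count is available, and one must generate the entire group $\Homeo(S^2)$ from the closed subgroup $\homeo_\lambda(S^2)$ (respectively $\Ant(S^2)$) together with a single element breaking the relevant symmetry. Linking the Oxtoby--Ulam theorem to the group-theoretic closure in the uniform topology---so that the non-canonical measure decompositions it provides are realised as actual limits of group products---will likely require the most care, and is the technical heart of the proof.
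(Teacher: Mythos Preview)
Your proposal is correct in outline and reaches the same conclusions, but the methods diverge from the paper's in both the finite- and infinite-dimensional halves.

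For the finite-dimensional arrows $\Rot(S^2)\to\Mobius(S^2)$ and $\Rot(S^2)\to\Lin(S^2)$, you invoke Iwasawa decomposition and a Lie-algebra generation argument. The paper instead argues directly with the singular value decomposition: any non-orthogonal element of $\SL(3,\R)$ (respectively $\SL(2,\R)$), together with $\SO(3)$ (respectively $\SO(2)$), generates the full group by explicit manipulation of eigenvalues under rotation and iteration. Your route is cleaner and more structural; the paper's is elementary and self-contained, avoiding any appeal to Lie theory beyond matrix algebra.

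For the infinite-dimensional arrows your strategy is to show that the $K$-orbit of $\lambda$ in the space of Oxtoby--Ulam measures is everything, and then conclude $K=\Homeo(S^2)$ by the coset trick $k^{-1}h\in\homeo_\lambda(S^2)$. The paper takes a different, more hands-on route: it fixes an $\epsilon$-grid $\Gamma(\epsilon)$, and for an arbitrary $h\in\Homeo(S^2)$ builds elements of $K$ that push the warped grid $h(\Gamma(\epsilon))$ back to $\Gamma(\epsilon)$. The engine is an ``elementary move'' lemma: using one non-area-preserving element together with $\homeo_\lambda(S^2)$, one manufactures homeomorphisms supported on arbitrarily small disks that transport a prescribed amount of area across a given arc. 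For the $\Ant_\lambda(S^2)\to\Homeo(S^2)$ arrow a second ``trashcan'' disk absorbs the antipodal copy of the perturbation. Your measure-orbit formulation is more conceptual and closer in spirit to Le~Roux's argument, but be aware that the step you flag as the technical heart---showing the orbit is \emph{all} Oxtoby--Ulam measures from a single $f$ with $f_*\lambda\neq\lambda$---is not a formal consequence of Oxtoby--Ulam alone and will in practice require the same kind of locally supported area-redistribution constructions that the paper carries out explicitly.

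For the intersection identities your plan coincides with the paper's: the same list is verified, by essentially the same observations (a M\"obius map that is antipodal, or area-preserving, or geodesic-preserving, is a rotation; a projective map that is area-preserving is a rotation).
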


A similar result in the setting of area-preserving homeomorphisms has been obtained recently by Le Roux in~\cite{roux}. We state the following.

\begin{problemb}
Classify all homogeneous transformation groups and their relations.
\end{problemb}

Current work of the author includes a careful study of the properties of the isotopy subgroup of a homogeneous transformation group in $\homeo(S^2)$,
and further relations in the diagram. 

Further, we remark the following. Call a closed and transitive group $G \subset \homeo(M)$, with $M$ the sphere, torus, or higher genus surface, an {\em exotic group} if 
the group $G$ does not contain a continuous arc of homeomorphisms. It is an interesting problem to determine whether there exist such exotic closed and transitive groups. 
In the remainder, we first discuss several results about Lie group actions on a surface. After this section, we proceed with the main result about the spherical groups, and close 
the paper with a list of groups in the case of the torus and higher genus surfaces.

\subsection{Lie group actions on a surface}

Since Lie groups, both finite-dimensional and infinite-dimensional, are particularly well understood, a correspondence between homeomorphism groups 
and Lie group actions is of interest, and we detail how the groups described in this paper can be given the interpretation of a Lie group, 
modulo passing to a finite degree regular cover of the surface on which the closed and transitive group is defined. In the case of the sphere,
the group $\Ant(S^2) \subset \homeo(S^2)$ is the group of homeomorphisms acting on the projective plane $\R\PPP^2$ and so with the 
subgroups of $\Ant(S^2)$. The pointwise classification of Lie groups by Mostow has been used by Belliart in~\cite{belliart} to classify the 
finite-dimensional Lie group actions of a surface without fixed points, which are the following
\begin{enumerate}
\item spherical case: the orthogonal action $\SO(3, \R)$, the complex-projective action $\PGL(2, \C)$, and the real-projective action $\PGL(3, \R)$,
\item torus case: the circle-action $(x,y) \mapsto (x + \alpha, y) \mod \Z^2$,
\item higher genus case: none.
\end{enumerate}
The possible pointwise $\GL(2, \R)$ actions of a mapping is described by the Lie subgroups of the Lie group $\GL(2, \R)$, namely
\begin{enumerate}
\item[(i)] the orthogonal group $\tu{O}(2,\R) \subset \GL(2, \R)$, 
\item[(ii)] the special linear group $\SL(2,\R) \subset \GL(2, \R)$, 
\item[(iii)] the Borel subgroup $\tu{U}(2, \R) \subset \GL(2,\R)$ of upper triangular matrices.
\end{enumerate}
On a given surface $M$, after passing to the $C^0$-closure, these pointwise Lie group actions integrate globally to (i) the finite-dimensional group of conformal homeomorphisms, 
(ii) the infinite-dimensional group of area-preserving homeomorphisms, and (iii) the infinite-dimensional group of homeomorphisms that fiber over the circle $\mathbb{S}^1$, 
in other words, the skew-product homeomorphisms of the form $g(x,y) = (\varphi(x), \psi(x,y))$. 

\subsection*{Acknowledgement}

The author worked on the main ideas presented in this paper during a two year research position at IMPA, Rio de Janeiro, Brazil, in the years 2010 and 2011, and wishes to 
thank these institutions for their hospitality. Further, he would like to thank Etienne Ghys for a useful discussion about the ideas in this paper at IMPA, and Fabio Tal 
for several useful discussions.

\section{The case of the sphere}

Closed and transitive subgroups of the homeomorphism group of the sphere that do not arise as extensions of the 
rotation group are called inhomogenous groups. These groups are equivalently characterized as closed and transitive groups that do not contain a compact 
and transitive kernel group. It is a natural question whether or not there exist such inhomogeneous groups on the sphere. On the torus, the group of Hamiltonian 
homeomorphisms is an example of such a inhomogeneous group. The homogeneous groups by definition, and the known 
inhomogeneous groups on the sphere, torus, and higher genus surfaces, are known to contain isotopies. Our focus in this section will be to consider homogeneous groups.

\subsection{Arrows in the finite-dimensional case}\label{sec_prelim}

The purpose of this section is to prove completeness of the arrows marked with $\star$ in the diagram involving the finite-dimensional transformation groups.
Geodesics on the sphere are great circles, where a circle on the sphere is the intersection of a Euclidean plane with the round sphere embedded in $\R^3$, 
and this circle is a great circle if the plane passes through the origin of the sphere.  We use the notation whereby the north-pole and south-pole on the sphere is 
indicated by $0$ and $\infty$ respectively. Further, given a point $p \in S^2$, denote $-p \in S^2$ its antipodal point on the sphere.

\subsubsection{The rotation group}\label{section_rotation_group}

The sphere $S^2$ can be defined as the set of points in $\R^3$ with unit distance from the origin. The group $\SO(3, \R)$ acts by rotations, and hence by isometries, on the sphere $S^2$ 
and we have that $\SO(3, \R) \cong \Rot(S^2)$. It is a classical result by Ker\'ekj\'art\'o~\cite{kerek}, that a compact group $G \subset \homeo(S^2)$ is topologically conjugate to a subgroup 
of $\Rot(S^2)$. Furthermore, since compact subgroups of $\SO(3,\R)$ are classified into either (i) finite groups, (ii) $\SO(2,\R)$, or (iii) $\SO(3,\R)$, we have that, up to conjugation, 
$\Rot(S^2)$ is the unique compact transitive subgroup of $\homeo(S^2)$ and thus $\Rot(S^2)$ is minimal as a compact and transitive subgroup of $\homeo(S^2)$.

\subsubsection{The real-linear action}

The real-projective plane $\R\PPP^2$ is defined as the sphere modulo the antipodal action $p \sim q$ if and only if $p = -q$. The group $\PGL(3,\R)$ acts as the 
centralizer of $\GL(3, \R)$ by the homothetic action. This action passes to the quotient $\R \PPP^2$. The action of an element $T \in \GL(3, \R)$
We have $\Lin(\R\PPP^2) \cong \PGL(3, \R)$ and $\Lin(\R\PPP^2) \subset \Homeo(\R\PPP^2)$ the group of orientation-preserving collineations, or projective transformations 
of $\R\PPP^2$. 

\begin{lem}\label{lem_area_linear}
We have that $\Lin(S^2) \cap \homeo_{\lambda}(S^2) = \Rot(S^2)$.
\end{lem}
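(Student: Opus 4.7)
The plan is to establish the nontrivial inclusion $\Lin(S^2) \cap \homeo_{\lambda}(S^2) \subseteq \Rot(S^2)$, since the reverse inclusion is immediate: rotations are induced by orthogonal matrices, hence both projective and isometric, so in particular area-preserving.

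First I would represent a given element $\Phi \in \Lin(S^2)$ as the sphere-projective action of some $T \in \GL(3,\R)$, namely
\begin{equation*}
\Phi(p) \;=\; \frac{T(p)}{\|T(p)\|}, \qquad p \in S^2 \subset \R^3.
\end{equation*}
Because $\Lin(S^2)$ corresponds to $\PGL(3,\R)$, the matrix $T$ is determined by $\Phi$ only up to a nonzero scalar, which causes no harm in what follows.

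Next, I would compute the area distortion factor of $\Phi$ with respect to the standard spherical area form $\lambda$. The key identity to verify is
\begin{equation*}
\Phi^{*}\lambda \;=\; \frac{|\det T|}{\|T(p)\|^{3}}\,\lambda.
\end{equation*}
This follows from a short computation using spherical coordinates: writing the Euclidean volume form as $r^{2}\,dr\,d\lambda$, the linear map $T$ multiplies Euclidean volume by $|\det T|$, while the factor $\|T(p)\|^{3}$ accounts for the subsequent radial renormalization $q \mapsto q/\|q\|$ that projects back to the unit sphere.

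The condition $\Phi \in \homeo_{\lambda}(S^{2})$ then reads $\|T(p)\|^{3} = |\det T|$ for every $p \in S^{2}$, i.e., $\|T(p)\|$ is a constant $c > 0$ on the unit sphere. A linear map that sends the unit sphere into a sphere of radius $c$ is $c$ times an orthogonal map (by polarization of the norm), so $T = cR$ with $R \in \OOO(3,\R)$; being orientation-preserving forces $R \in \SO(3,\R)$, whence $\Phi \in \Rot(S^{2})$. The main obstacle is carefully verifying the Jacobian formula; once it is in place, the remainder is pure linear algebra.
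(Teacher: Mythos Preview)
Your argument is correct and complete: the Jacobian identity $\Phi^{*}\lambda = \dfrac{|\det T|}{\|T(p)\|^{3}}\,\lambda$ is exactly right (your derivation via the Euclidean volume form $r^{2}\,dr\wedge d\lambda$ and the factorisation $(r,\omega)\mapsto(r\|T(\omega)\|,\Phi(\omega))$ works cleanly), and from $\|T(p)\|\equiv c$ the polarization step gives $T^{*}T=c^{2}I$, hence $T/c\in\SO(3,\R)$.

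The paper, however, proceeds along a different, more synthetic route. It exploits that elements of $\Lin(S^2)$ send great circles to great circles: any two perpendicular great circles cut $S^{2}$ into four lunes of equal area, and an area-preserving projective map must send such a pair to another pair of great circles bounding four equal-area lunes, which forces the image pair to be perpendicular as well. Since every pair of orthogonal directions at every point arises this way, the map is angle-preserving, i.e.\ conformal; the paper then invokes the forward reference Lemma~\ref{lem_conf_lin} (that $\Mobius(S^2)\cap\Ant(S^2)=\Rot(S^2)$), together with $\Lin(S^2)\subset\Ant(S^2)$, to conclude. Your approach is self-contained and purely linear-algebraic, avoiding both the conformality step and the dependence on Lemma~\ref{lem_conf_lin}; the paper's approach is more geometric and fits its running theme of symmetry conditions, at the cost of relying on a lemma proved later.
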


\begin{proof}
Let $h$ be linear and area-preserving. Take two perpendicular geodesics, dividing the sphere into four equal quarters. These two perpendicular geodesics have to be sent to two 
perpendicular geodesics, since otherwise the area of these quarters is not preserved by $h$. Since this holds for each pair of perpendicular geodesics, this implies that $h$ is conformal. 
Since a conformal and linear mapping is an isometry by Lemma~\ref{lem_conf_lin} below, the proof is complete.
\end{proof}

\begin{prop}\label{prop_rot_lin}
The arrow $\Rot(S^2) \lra \Lin(S^2)$ is complete.
\end{prop}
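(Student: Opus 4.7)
The plan is to prove the stronger assertion that there is no closed subgroup strictly between $\Rot(S^2)$ and $\Lin(S^2)$; completeness of the arrow follows immediately. I work at the Lie-algebra level, identifying $\Lin(S^2)$ with a connected Lie group having Lie algebra $\mathfrak{sl}(3,\R)$ and containing $\Rot(S^2) \cong \SO(3)$ as a maximal compact subgroup. The Cartan decomposition
\[
\mathfrak{sl}(3,\R) \;=\; \mathfrak{so}(3) \oplus \mathfrak{p}
\]
exhibits a $5$-dimensional complement $\mathfrak{p}$ consisting of the traceless symmetric $3\times 3$ real matrices.

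The key algebraic input I will need is that the adjoint action of $\SO(3)$ on $\mathfrak{p}$ by conjugation is irreducible: it is the traceless part of $\mathrm{Sym}^{2}(\R^{3})$, i.e.\ the spin-$2$ representation. Consequently every $\SO(3)$-invariant Lie subalgebra of $\mathfrak{sl}(3,\R)$ containing $\mathfrak{so}(3)$ equals either $\mathfrak{so}(3)$ or $\mathfrak{sl}(3,\R)$. Now let $K$ be a closed subgroup with $\Rot(S^2) \subseteq K \subseteq \Lin(S^2)$, write $K_{0}$ for its identity component, and let $\mathfrak{k}$ be the Lie algebra of $K_{0}$. Since $\Rot(S^2)$ is connected and contained in $K$ one has $\mathfrak{so}(3) \subseteq \mathfrak{k}$; since $\Rot(S^2) \subseteq K$ normalizes $K_{0}$, the subspace $\mathfrak{k}$ is stable under $\mathrm{Ad}(\SO(3))$. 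By the preceding dichotomy either $\mathfrak{k} = \mathfrak{sl}(3,\R)$, forcing $K_{0} = \Lin(S^2)$ and hence $K = \Lin(S^2)$; or $\mathfrak{k} = \mathfrak{so}(3)$, so that $K_{0} = \Rot(S^2)$ and $K$ lies inside the normalizer $N_{\Lin}(\Rot)$.

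To finish the second case I verify $N_{\Lin}(\Rot) = \Rot(S^2)$ by a short matrix computation: if $g \in \SL(3,\R)$ satisfies $g\,\SO(3)\,g^{-1} \subseteq \SO(3)$, then for every $k \in \SO(3)$ one gets $(gkg^{-1})^{\mathrm{T}}(gkg^{-1}) = I$, which reduces to $k^{\mathrm{T}}(g^{\mathrm{T}}g)k = g^{\mathrm{T}}g$. Thus $g^{\mathrm{T}}g$ is a positive definite symmetric matrix commuting with all of $\SO(3)$; by Schur's lemma applied to the (absolutely irreducible) defining representation of $\SO(3)$ on $\R^{3}$, it must be a positive scalar multiple of $I$, and $\det g = 1$ then forces $g \in \SO(3)$. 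Hence $K \in \{\Rot(S^2), \Lin(S^2)\}$, which is strictly stronger than completeness of the arrow and implies it. The only real technical ingredient is the irreducibility of $\mathfrak{p}$ under $\mathfrak{so}(3)$, which I consider the (modest) heart of the proof; the normalizer computation is routine, and the sole point requiring bookkeeping rather than insight is confirming the precise Lie-group model of $\Lin(S^2)$ and its connectedness, which does not affect the Lie-algebraic conclusion.
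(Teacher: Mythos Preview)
Your argument is correct and in fact proves the stronger statement that $\Rot(S^2)$ is maximal among \emph{all} closed subgroups of $\Lin(S^2)$, not only those breaking the additional symmetries. The one implicit step worth making explicit is that a $C^0$-closed subgroup of $\Lin(S^2)$ is closed in the Lie-group topology of $\Lin(S^2)\cong\SL(3,\R)$, so that Cartan's closed-subgroup theorem applies and $K$ really has a Lie algebra; this holds because the inclusion $\SL(3,\R)\hookrightarrow\homeo(S^2)$ is a homeomorphism onto its image.

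Your route, however, is genuinely different from the paper's. The paper proceeds by an explicit matrix construction: using the singular value decomposition to write an arbitrary element of $\Lin(S^2)$ as $r_1 D r_2$ with $r_i\in\Rot(S^2)$ and $D$ diagonal, it then shows by hand that $\langle\SO(3,\R),A\rangle=\SL(3,\R)$ for any non-orthogonal $A\in\SL(3,\R)$, by iterating, rotating, and composing diagonal elements to manufacture arbitrary eigenvalue triples. Your argument replaces this direct generation by the structural observation that the Cartan complement $\mathfrak{p}$ (traceless symmetric matrices) is an irreducible $\SO(3)$-module, so any $\mathrm{Ad}(\SO(3))$-invariant subalgebra between $\mathfrak{so}(3)$ and $\mathfrak{sl}(3,\R)$ is forced to one of the endpoints, followed by the normalizer computation $N_{\Lin}(\Rot)=\Rot$ to rule out disconnected extensions. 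The paper's approach is more elementary---it needs only linear algebra and avoids Lie theory entirely---while yours is cleaner, explains \emph{why} maximality holds (irreducibility of the isotropy representation), and generalizes immediately to any pair $(K,G)$ with $K$ maximal compact in a simple $G$ and $\mathfrak{p}$ irreducible under $K$.
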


By the singular value decomposition of matrices, every element $T \in \GL(3,\R)$ can be written as $R_1 D R_2$, with $R_1, R_2 \in \SO(3,\R)$ 
and $D \in \GL(3,\R)$ a diagonal matrix with eigenvalues $\lambda_1, \lambda_2, \lambda_3 \in \R^+$ on the diagonal. 

\begin{lem}\label{lem_pgl_three_decomp}
Given $T \in \PGL(3, \R)$ and $R_1, R_2 \in \SO(3,\R)$, where $R_1,R_2,T$ correspond to $r_1,r_2,h \in \Lin(S^2)$, with $r_1, r_2 \in \Rot(S^2)$. Then $R_1 T R_2$ corresponds 
to $r_1 h r_2 \in \Lin(S^2)$.
\end{lem}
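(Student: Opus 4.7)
The plan is to make explicit the correspondence $\PGL(3,\R) \to \Lin(S^2)$ and then verify that this correspondence is a group homomorphism by a direct computation, from which the lemma follows immediately by specializing to the composition $R_1 T R_2$.

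First I would recall that, given a matrix $T \in \GL(3,\R)$, the associated map $h_T$ on $S^2 \subset \R^3$ is defined by radial normalization,
\begin{equation*}
h_T(p) = \frac{T(p)}{\|T(p)\|}, \qquad p \in S^2.
\end{equation*}
Since $h_{\lambda T} = h_T$ for every $\lambda > 0$, this descends to a well-defined map at the projective level, giving the correspondence between elements of $\PGL(3,\R)$ and elements of $\Lin(S^2)$ used in the statement. For $R \in \SO(3,\R)$ one has $\|R(p)\| = \|p\| = 1$, so $h_R(p) = R(p)$ and the normalization is invisible.

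The key computation is the homomorphism property $h_{AB} = h_A \circ h_B$ for any $A, B \in \GL(3,\R)$. Starting from the right side,
\begin{equation*}
h_A(h_B(p)) \;=\; h_A\!\left(\frac{B(p)}{\|B(p)\|}\right) \;=\; \frac{A\bigl(B(p)/\|B(p)\|\bigr)}{\bigl\|A\bigl(B(p)/\|B(p)\|\bigr)\bigr\|} \;=\; \frac{AB(p)/\|B(p)\|}{\|AB(p)\|/\|B(p)\|} \;=\; \frac{AB(p)}{\|AB(p)\|},
\end{equation*}
which is precisely $h_{AB}(p)$. Here the factor $\|B(p)\|$ in the numerator and denominator cancels because $A$ is linear.

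With this in hand, the lemma follows by applying the homomorphism property twice (or once to the product $R_1 T R_2$ decomposed as $R_1 \cdot (T R_2)$, then once more) to obtain
\begin{equation*}
h_{R_1 T R_2} \;=\; h_{R_1} \circ h_T \circ h_{R_2} \;=\; r_1 \circ h \circ r_2,
\end{equation*}
using that $h_{R_i} = r_i$ since $R_i \in \SO(3,\R)$. The statement is really a bookkeeping lemma — there is no genuine obstacle — but the only point requiring care is that the map $T \mapsto h_T$ is well defined modulo positive scalars (so that it factors through $\PGL(3,\R)$ in the sense used by the paper), which is the reason for introducing the normalization by $\|T(p)\|$ rather than working with $T(p)$ itself.
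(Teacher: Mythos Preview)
Your proof is correct and takes essentially the same approach as the paper: both arguments rest on the fact that the correspondence $T \mapsto h_T$ is a group homomorphism, from which the lemma follows by specializing to the product $R_1 T R_2$. The paper asserts this abstractly (the map $\GL(3,\R) \to \Lin(S^2)$ descends to a group isomorphism $\PGL(3,\R) \to \Lin(S^2)$, with $\SO(3,\R)$ landing in $\Rot(S^2)$), whereas you verify $h_{AB} = h_A \circ h_B$ by direct computation with the explicit normalization formula $h_T(p) = T(p)/\|T(p)\|$.
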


\begin{proof}
The inclusion $\Rot(S^2) \subset \Lin(S^2)$ is induced by the inclusion $\SO(3,\R) \subset \GL(3,\R)$, which in turn induces the inclusion $\PSO(3, \R) \subset \PGL(3, \R)$. Since the group 
homomorphism $\GL(3,\R) \ra \Lin(S^2)$, becomes an isomorphism after projectivizing $\GL(3, \R) \ra \PGL(3, \R)$, the map $T \mapsto h_T$ in the quotient passes to a group isomorphism 
of $\PGL(3, \R) \ra \Lin(S^2)$, and elements of $\SO(3,\R) = \PSO(3, \R) \subset \PGL(3, \R)$ correspond to rotations on the sphere, the claim follows.
\end{proof}

\begin{lem}\label{lem_rotation_SL_three_full}
The group $\langle \SO(3,\R), A \rangle$ generated by $\SO(3,\R)$ and a non-orthogonal $A \in \SL(3,\R)$ equals $\SL(3,\R)$.
\end{lem}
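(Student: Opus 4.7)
The plan is to take $H := \langle \SO(3,\R), A \rangle$ as a closed (hence Lie, by Cartan's theorem) subgroup of $\SL(3,\R)$ --- the natural reading in this paper's framework of closed transformation groups --- then compute its Lie algebra $\mathfrak{h} \subseteq \mathfrak{sl}(3,\R)$ and deduce $\mathfrak{h} = \mathfrak{sl}(3,\R)$, whence $H = \SL(3,\R)$ by connectedness of $\SL(3,\R)$.

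First, the singular value decomposition recalled just before the lemma factors $A = R_1 D R_2$ with $R_1, R_2 \in \SO(3,\R)$ and $D = \mathrm{diag}(\lambda_1,\lambda_2,\lambda_3)$ positive diagonal of determinant $1$. Non-orthogonality of $A$ forces $D \neq I$, so some pair satisfies $\lambda_i \neq \lambda_j$. Since $R_1, R_2 \in \SO(3,\R) \subseteq H$, the matrix $D = R_1^{-1} A R_2^{-1}$ lies in $H$.

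For each $X \in \mathfrak{so}(3,\R)$ the smooth curve
\[
\gamma(t) = \exp(tX)\, D\, \exp(-tX)\, D^{-1}
\]
lies in $H$ and satisfies $\gamma(0) = I$, so $\gamma'(0) = X - \mathrm{Ad}(D)(X) \in \mathfrak{h}$. Using $\mathrm{Ad}(D)(E_{ij}) = (\lambda_i/\lambda_j)\, E_{ij}$ on the matrix units, evaluation on the $\mathfrak{so}(3,\R)$-basis vectors $E_{ij} - E_{ji}$ produces an element whose traceless-symmetric component equals $\tfrac{1}{2}(\lambda_j/\lambda_i - \lambda_i/\lambda_j)(E_{ij} + E_{ji})$. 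This is nonzero whenever $\lambda_i \neq \lambda_j$, so $\mathfrak{h}$ contains a nonzero element of the space $\mathfrak{p}$ of traceless symmetric matrices.

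In the Cartan decomposition $\mathfrak{sl}(3,\R) = \mathfrak{so}(3,\R) \oplus \mathfrak{p}$, the $5$-dimensional piece $\mathfrak{p}$ is the irreducible $\SO(3,\R)$-representation of degree-two spherical harmonics. Since $\mathfrak{h}$ is $\mathrm{Ad}(\SO(3,\R))$-invariant (as $\SO(3,\R) \subseteq H$) and meets $\mathfrak{p}$ nontrivially, irreducibility forces $\mathfrak{p} \subseteq \mathfrak{h}$, so $\mathfrak{h} = \mathfrak{sl}(3,\R)$ and $H = \SL(3,\R)$. The main obstacle is the $\SO(3,\R)$-irreducibility of $\mathfrak{p}$, a standard but non-elementary fact of representation theory; the remaining ingredients reduce to the singular value decomposition and the explicit form of $\mathrm{Ad}(D)$ on $\mathfrak{so}(3,\R)$.
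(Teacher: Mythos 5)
Your proposal is correct as a proof that the \emph{closed} group generated by $\SO(3,\R)$ and $A$ equals $\SL(3,\R)$, and it takes a genuinely different route from the paper. The paper's proof is elementary and constructive: after the same SVD reduction to a diagonal $D \neq I$, it manufactures, inside the generated group itself, elements with singular values $(t^{-1},1,t)$ for every $t \geq 1$ --- by composing rotated copies of the powers $T^n$ about the middle eigendirection and invoking continuity (an intermediate-value argument) of the resulting singular values --- then composes two such elements to realize arbitrary positive diagonal matrices of determinant one, and finishes by pre- and post-composing with rotations via Lemma~\ref{lem_pgl_three_decomp}. You replace all of this with infinitesimal data: the commutator curves $\exp(tX)\,D\exp(-tX)\,D^{-1}$ place $X - \mathrm{Ad}(D)X$ in the Lie algebra $\mathfrak{h}$, and since $\mathfrak{so}(3,\R) \subseteq \mathfrak{h}$ you may legitimately subtract the antisymmetric component, so $\mathfrak{h}$ meets the traceless-symmetric summand $\mathfrak{p}$ nontrivially; $\mathrm{Ad}(\SO(3,\R))$-invariance of $\mathfrak{h}$ plus irreducibility of the spin-two representation $\mathfrak{p}$ then gives $\mathfrak{h} = \mathfrak{sl}(3,\R)$. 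Your computations check out ($\mathrm{Ad}(D)E_{ij} = (\lambda_i/\lambda_j)E_{ij}$, the symmetric part $\tfrac12(\lambda_j/\lambda_i - \lambda_i/\lambda_j)(E_{ij}+E_{ji})$, and the openness-plus-connectedness conclusion). What each approach buys: yours is shorter, conceptually transparent, and generalizes verbatim to $\SO(n,\R)$ and any non-orthogonal $A \in \SL(n,\R)$ --- in particular it covers the $\SL(2,\R)$ analogue invoked in Lemma~\ref{lem_mob_disk_max} --- at the price of Cartan's closed-subgroup theorem and the representation-theoretic irreducibility fact; the paper's argument is longer but uses nothing beyond linear algebra and continuity.

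One caveat you should make explicit rather than fold into a parenthetical: the abstract group $\langle \SO(3,\R), A\rangle$ is not known to be closed a priori ($\SL(3,\R)$ has plenty of dense proper subgroups, e.g.\ $\SL(3,\Q)$), so Cartan's theorem applies only to its closure, and your argument literally proves $\overline{\langle \SO(3,\R), A\rangle} = \SL(3,\R)$. This density statement is all the paper actually needs --- in Proposition~\ref{prop_rot_lin} the lemma is applied to a group $K$ with $\Rot(S^2) \subset K$ that is closed by hypothesis --- but it is weaker than the lemma as stated, whereas the paper's constructive proof does yield exact generation, since every element it builds is an honest finite composition of rotations and powers of $A$. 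If you wanted exact generation from your setup, you would have to supplement the Lie-algebra argument with something like the paper's intermediate-value trick on singular values.
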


\begin{proof}
Let $T \in \langle \SO(3,\R), A \rangle$ such that $T \notin \SO(3, \R)$, which we may assume is in diagonal form, and denote $\lambda_i>0$, with $i=1,2,3$ its eigenvalues. 
First, assume that $\lambda_2 =1$ for $T$, so that $\lambda_1 < 1 < \lambda_3$, where $\lambda := \lambda_3$ and $\lambda^{-1} := \lambda_1$. Iterating $T$, one gets 
$T_n := T^n$ for which $\lambda_1^n \ra 0$ and $\lambda_3^n \ra \infty$, and $\lambda_2^n = 1$ for all $n \in \N$. For fixed $n \in \N$, rotating $T_n$ along the axis defined 
by $v_2$ and taking compositions, one obtains $T$ with largest eigenvalue $t \in [1, \lambda^n]$ and smallest eigenvalue $t^{-1}$. Letting $n \ra \infty$, we thus obtain all 
possible combinations for $\lambda_1$ and $\lambda_3 = \lambda_1^{-1}$. To produce a non-orthogonal element $T$ for which $\lambda_2 =1$, take $T_0$ with general 
eigenvalues, where $\lambda_1 < 1 < \lambda_3$ and $\lambda_1 \leq \lambda_2 \leq \lambda_3$, rotate by a small amount along the axis defined by $v_2$, and take the 
inverse of $T_0$ with the rotated version, so that the eigenvalue corresponding to the direction $v_2$ is one, but the new eigenvalues lying in the plane spanned by $v_1$ 
and $v_3$ are not both one. This produces the desired non-orthogonal element $T$ with the properties as mentioned.
To produce any $T \in \SL(3,\R)$, take an element $T_1$ with eigenvalues $(\lambda_1, \lambda_2, \lambda_3) = (\lambda_1 , 1, \lambda_1^{-1})$ and compose with $T_2$ with 
eigenvalues $(1, \lambda_2, \lambda_2^{-1})$, so that $T_1 T_2$ has eigenvalues $(\lambda_1, \lambda_2, (\lambda_1 \lambda_2)^{-1})$. Pre- and postcomposing $T_1T_2$ with 
rotations, we obtain any $T \in \SL(3,\R)$ as desired.
\end{proof}

\begin{proof}[Proof of Proposition~\ref{prop_rot_lin}]
Since the groups $\SL(3, \R)$ and $\PGL(3, \R)$ are group isomorphic, combining Lemma~\ref{lem_pgl_three_decomp} with Lemma~\ref{lem_rotation_SL_three_full}, 
the claim follows.
\end{proof}

\subsubsection{The complex-linear action}

The complex-linear action on the sphere, here identified as the complex plane $\C$ compactified with a point $\infty$, where, taking the centralizer of $\SL(2, \C)$ by the homothetic action, 
one obtains the action $\PGL(2, \C)$, the M\"obius action, acting by homeomorphisms on the sphere. The group $\Mobius(S^2)$ is sharply $3$-transitive on $S^2$ and consists precisely 
of the conformal homeomorphisms of $S^2$.

\begin{lem}[Antipodal M\"obius transformations]\label{lem_conf_lin}
Antipodal M\"obius transformations are rotations, that is, $\Mobius(S^2) \cap \Ant(S^2) = \Rot(S^2)$.
\end{lem}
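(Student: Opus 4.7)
The plan is to use the standard identification $S^2 \cong \hat{\mathbb{C}}$ via stereographic projection from $\infty$ (the south pole), under which the antipodal map $p \mapsto -p$ becomes $z \mapsto -1/\bar{z}$ on $\hat{\mathbb{C}}$. The inclusion $\Rot(S^2) \subseteq \Mobius(S^2) \cap \Ant(S^2)$ is immediate, since rotations are conformal (hence Möbius) and clearly commute with the antipodal involution on $S^2 \subset \mathbb{R}^3$. So the content is the reverse inclusion.

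First, I would write an arbitrary orientation-preserving Möbius transformation as $f(z) = (az+b)/(cz+d)$ with $ad - bc = 1$, so that $f$ is represented by $M = \bigl(\begin{smallmatrix} a & b \\ c & d \end{smallmatrix}\bigr) \in \mathrm{SL}(2,\mathbb{C})$, well defined up to the sign ambiguity of $\mathrm{PSL}(2,\mathbb{C}) = \Mobius(S^2)$. Then I would impose the commutation relation $f(-1/\bar{z}) = -1/\overline{f(z)}$ and compute both sides as rational functions in $\bar{z}$: the left side becomes $(b\bar{z} - a)/(d\bar{z} - c)$ and the right side becomes $-(\bar{c}\bar{z} + \bar{d})/(\bar{a}\bar{z} + \bar{b})$.

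Cross-multiplying and comparing coefficients of $\bar{z}^2$, $\bar{z}$, and $1$ yields the two equations
\begin{equation*}
a\bar{b} + c\bar{d} = 0, \qquad |a|^2 + |c|^2 = |b|^2 + |d|^2.
\end{equation*}
Geometrically, the first says the columns of $M$ are Hermitian-orthogonal, and the second says they have equal norm. Writing $r^2$ for their common squared norm, the matrix $r^{-1}M$ is therefore unitary, so $\det(r^{-1}M)$ lies on the unit circle. Combining with $\det M = 1$ gives $r^2 \cdot \det(r^{-1}M) = 1$, which forces $r = 1$ and $\det(r^{-1}M) = 1$. Hence $M \in \mathrm{SU}(2)$.

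Finally, I would invoke the classical fact that the restriction of the double cover $\mathrm{SL}(2,\mathbb{C}) \to \Mobius(S^2)$ to $\mathrm{SU}(2)$ has image exactly $\mathrm{PSU}(2) \cong \mathrm{SO}(3,\mathbb{R}) = \Rot(S^2)$, so that $f \in \Rot(S^2)$ as required. I do not expect any real obstacle here: the identification of the antipodal map in stereographic coordinates is standard, the algebra reduces to a two-line coefficient comparison, and the conclusion that an orthogonal-column, equal-norm, determinant-one matrix in $\mathrm{GL}(2,\mathbb{C})$ lies in $\mathrm{SU}(2)$ is elementary. The only mild care needed is to keep the projective normalization $ad-bc=1$ consistent throughout the coefficient comparison so as not to produce spurious scalar ambiguity.
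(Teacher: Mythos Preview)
Your proof is correct and entirely standard, but it proceeds along a genuinely different route from the paper. The paper argues geometrically: given $h \in \Mobius(S^2) \cap \Ant(S^2)$, post-compose with a rotation to arrange $h(0)=0$ (hence $h(\infty)=\infty$ by antipodality), observe that the image of the equatorial great circle under $h$ is again a great circle (since a M\"obius image of a circle is a circle, and antipodal symmetry forbids it from lying in a single hemisphere), use an intersection point with the equator to manufacture a third fixed point after a further rotation about $\{0,\infty\}$, and then invoke sharp $3$-transitivity of $\Mobius(S^2)$ to conclude that the resulting map is the identity. Your approach instead writes the antipodal map in stereographic coordinates as $z\mapsto -1/\bar z$, imposes the commutation relation on a normalized $(az+b)/(cz+d)$, and extracts the equations $a\bar b+c\bar d=0$ and $|a|^2+|c|^2=|b|^2+|d|^2$, which force $M\in\mathrm{SU}(2)$ and hence $f\in\mathrm{PSU}(2)\cong\Rot(S^2)$. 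The paper's argument is coordinate-free and uses only the structural fact of sharp $3$-transitivity; your argument is computational but has the advantage of being fully self-contained and of producing, as a byproduct, the explicit identification of the intersection with $\mathrm{PSU}(2)$ rather than merely showing it is contained in $\Rot(S^2)$.
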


\begin{proof}
Take $h \in \Ant(S^2)$. Pre- and post-composing with a rotations, we may assume that $h(0) = 0$, and thus also $h(\infty) = (\infty)$. Consider the image $\gamma = h(\gamma_1)$ 
where $\gamma_1$ is the horizontal great circle passing through $1 \in S^2$. As $h \in \Mobius(S^2) \cap \Ant(S^2)$, $\gamma$ is again a great circle as it can not be contained in a single 
hemisphere. In particular, $\gamma \cap \gamma_1$ has at least two intersection points. Taking a rotation $r \in \stab_{\Rot}(0,\infty)$ rotating one of the points $\gamma \cap \gamma_1$ 
back to $1 \in S^2$, the homeomorphism thus obtained fixes $0,1,\infty$. Since the only M\"obius transformation fixing $0,1,\infty$ is the identity by sharp $3$-transitivity of the M\"obius 
group, the claim follows.
\end{proof}

\begin{lem}\label{lem_stab_two_isom}
For each $g \in G$, there exist $r_1, r_2 \in \Rot(S^2)$ and $h \in G_2$, such that $g = r_1 h r_2$.
\end{lem}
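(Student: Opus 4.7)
The plan is to establish a Cartan-type $K \cdot A \cdot K$ factorization of $G$ in which $\Rot(S^2)$ plays the role of the maximal compact. The first step is to reformulate the conclusion geometrically: writing $g = r_1 h r_2$ with $h \in G_2 = \stab_G(\{0,\infty\})$ is equivalent to requiring that $r_1^{-1} g r_2^{-1}$ preserve the antipodal pair $\{0,\infty\}$. Since $\Rot(S^2)$ acts transitively on ordered antipodal pairs (identified with $S^2$ itself via $p \mapsto (p,-p)$), it suffices to produce a single antipodal pair $\{p,-p\}$ such that $\{g(p), g(-p)\}$ is again antipodal; once this pair is in hand, rotations $r_1$ and $r_2$ can be chosen to transport $(0,\infty)$ onto $(g(p), -g(p))$ and onto $(p,-p)$ respectively, and then $h := r_1^{-1} g r_2^{-1}$ will fix both $0$ and $\infty$.

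The main technical step is therefore the production of such an antipodal pair, and here I would use a Lefschetz fixed point argument applied to the auxiliary self-map
\[
\phi_g := \sigma \circ g^{-1} \circ \sigma \circ g : S^2 \longrightarrow S^2,
\]
where $\sigma(p) := -p$ is the antipodal involution on $S^2$. A direct unwinding shows that $\phi_g(p) = p$ is equivalent to $g(-p) = -g(p)$, which is precisely the antipodality-preserving condition on $\{p,-p\}$. Since $\sigma$ has degree $-1$ on the even-dimensional sphere $S^2$ and $g$ is orientation-preserving of degree $+1$, the composition $\phi_g$ has degree $(-1)(+1)(-1)(+1) = +1$, so its Lefschetz number equals $1 + 1 = 2 \neq 0$, and $\phi_g$ necessarily has a fixed point.

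To finish, I would choose $p \in S^2$ with $g(-p) = -g(p)$, then pick $r_2 \in \Rot(S^2)$ with $r_2^{-1}(0) = p$ and $r_2^{-1}(\infty) = -p$, and $r_1 \in \Rot(S^2)$ with $r_1(0) = g(p)$ and $r_1(\infty) = -g(p)$; both choices are legitimate by transitivity of $\Rot(S^2)$ on oriented antipodal pairs. A short verification that $h := r_1^{-1} g r_2^{-1}$ fixes $\{0,\infty\}$ gives $h \in G_2$, and $g = r_1 h r_2$ as required. The main obstacle is the Lefschetz degree bookkeeping; once the correct auxiliary map $\phi_g$ is identified, the factorization is forced by the transitivity of the rotation group on antipodal pairs. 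Structurally, this is the topological analog of the classical Cartan decomposition $\PSL(2,\C) = \mathrm{PSU}(2) \cdot A \cdot \mathrm{PSU}(2)$.
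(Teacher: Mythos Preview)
Your proof is correct and reaches the same geometric reduction as the paper: it suffices to find a pair $\{p,-p\}$ with $g(-p) = -g(p)$, after which rotations carry $\{p,-p\}$ and $\{g(p),g(-p)\}$ to $\{0,\infty\}$. The difference lies in how this pair is produced. The paper argues by contradiction: assuming no such pair exists, it builds a continuous line field on $S^2$ by assigning to each $q$ the direction of the unique geodesic through $q = g(p)$ and $g(-p)$, and then invokes the hairy ball theorem. Your route is instead to encode the condition as a fixed-point equation for the auxiliary map $\phi_g = \sigma g^{-1} \sigma g$, compute $\deg(\phi_g) = 1$, and apply the Lefschetz fixed point theorem to get $L(\phi_g) = 2 \neq 0$.

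Both arguments ultimately rest on $\chi(S^2) \neq 0$, but your Lefschetz packaging is cleaner: it sidesteps the continuity verification for the line field (which in the paper requires checking that $d(g(p),g(-p))$ is bounded away from both $0$ and $\pi$), and it generalizes immediately to all even-dimensional spheres with the same degree bookkeeping. The paper's line-field construction, on the other hand, is more hands-on and avoids invoking Lefschetz theory, relying only on the nonexistence of a nowhere-vanishing line field on $S^2$. Your remark that this is the topological shadow of the Cartan decomposition $\PSL(2,\C) = \mathrm{PSU}(2) \cdot A \cdot \mathrm{PSU}(2)$ is apt and is exactly how the lemma is used downstream.
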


\begin{proof}
To prove the result, we claim that for every $g \in \homeo(S^2)$, there exist a pair of antipodal points $\{p,-p \}$ in $S^2$ with the property that $g(-p) = - g(p)$. The pairs $\{p,-p\}$ 
and $\{g(p), -g(p)\}$ can be brought by rotations $r_1, r_2 \in \Rot(S^2)$ back to $\{0,\infty \}$ respectively, to prove the desired result. To prove the claim, suppose that $g(-p) \neq - g(p)$ 
for all $p \in S^2$. Define a line field on the sphere $S^2$ as follows. For each $q \in S^2$, denote $p := g^{-1}(q)$ and let $\gamma_{q}$ be the geodesic
passing through $q = g(p)$ and $w=g(-p)$. The geodesic passing through $q$ is unique and assigns a line element at $q$ that depends continuously on the basepoint $q$ since antipodal 
points $\{p,-p\}$ are mapped by $g$ to points whose distance is bounded away from $0$ and $\pi$, for all $p \in S^2$ by continuity of $g$ combined with the assumption that $g$ 
does not send antipodal points to antipodal points. This line field is globally defined and everywhere continuous, which is impossible by the hairy ball theorem.
\end{proof}

\begin{lem}\label{lem_mob_area_intersect}
Area-preserving M\"obius transformations are rotations, that is, $\homeo_{\lambda}(S^2) \cap \Mobius(S^2) = \Rot(S^2)$.
\end{lem}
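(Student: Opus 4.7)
The plan is to leverage the conformality of M\"obius transformations. Every $h \in \Mobius(S^2)$ is a smooth diffeomorphism of $S^2$ that is conformal with respect to the round metric $g$, meaning $h^*g = \phi \cdot g$ for some positive smooth function $\phi: S^2 \to \R_{>0}$. In two dimensions the associated Riemannian area form $\omega$ transforms as $h^*\omega = \phi \cdot \omega$, since $\omega$ is the top exterior power of an orthonormal coframe and such a coframe for $h^*g$ is the $\sqrt{\phi}$-rescaling of one for $g$. The hypothesis $h \in \homeo_{\lambda}(S^2)$ then forces $\phi \equiv 1$, so $h^*g = g$; that is, $h$ is a Riemannian isometry of the round sphere. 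As the orientation-preserving isometries of $(S^2, g)$ coincide with $\Rot(S^2) \cong \SO(3,\R)$ by the discussion in Section~\ref{section_rotation_group}, it follows that $h \in \Rot(S^2)$.

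If one prefers an argument in the spirit of Lemma~\ref{lem_conf_lin}, an explicit variant is available. Use the Cartan ($KAK$) decomposition $\Mobius(S^2) = \Rot(S^2) \cdot B \cdot \Rot(S^2)$, where $B$ is the one-parameter subgroup of pure boosts $z \mapsto \lambda z$ ($\lambda > 0$) in a stereographic coordinate, to write $h = r_1 \circ b \circ r_2$ with $r_1, r_2 \in \Rot(S^2)$ and $b \in B$. Since rotations preserve spherical area, area preservation for $h$ reduces to area preservation for $b$. A direct integration shows that the spherical disk $\{|z| \leq R\}$ has area $\tfrac{4\pi R^2}{1+R^2}$, whereas its image $\{|z| \leq \lambda R\}$ under $b$ has area $\tfrac{4\pi \lambda^2 R^2}{1+\lambda^2 R^2}$; equality for every $R>0$ forces $\lambda=1$, so $b=\mathrm{id}$ and $h = r_1 r_2 \in \Rot(S^2)$.

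Neither route presents a serious obstacle. In the first variant the only technical ingredient is the transformation law $h^*\omega = \phi\,\omega$ under a conformal map on a surface, which is standard. In the second, the key preliminary is the $KAK$ decomposition of $\PSL(2,\C)$, or equivalently the geometric observation that any M\"obius transformation can be pre- and post-composed by rotations so as to fix $\{0,\infty\}$ setwise, after which it takes the normal form $z \mapsto \mu z$ or $z \mapsto \mu/z$ in the stereographic coordinate; the area calculation in either normal form then forces $|\mu|=1$, which identifies $h$ as a rotation about the vertical or a horizontal axis respectively.
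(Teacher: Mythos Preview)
Both of your arguments are correct. Your second variant is essentially the paper's own proof: the paper invokes Lemma~\ref{lem_stab_two_isom} (established via the hairy ball theorem) to write $h = r_1 g r_2$ with $g$ stabilizing the set $\{0,\infty\}$, then observes that such a M\"obius $g$ is a dilation $z \mapsto |\lambda| z$ composed with a rotation, and that area preservation kills the dilation part. You arrive at the same normal form via the $KAK$ decomposition of $\PSL(2,\C)$ instead, and then make the area balance explicit with the formula $\tfrac{4\pi R^2}{1+R^2}$. Your first argument, by contrast, is genuinely different and more conceptual: it sidesteps any decomposition by noting that on a surface a conformal diffeomorphism with $h^*g = \phi\, g$ satisfies $h^*\omega = \phi\, \omega$, so area preservation forces $\phi \equiv 1$ and hence $h$ is an isometry. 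This is cleaner and avoids any case analysis, at the cost of importing a small piece of Riemannian geometry; the paper's route is more hands-on and stays within the elementary framework it has built for the surrounding lemmas.
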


\begin{proof} 
By Lemma~\ref{lem_stab_two_isom}, a homeomorphism $h \in \Mobius(S^2)$ is a composition $h = r_1 g r_2$, with $r_1, r_2 \in \Rot(S^2)$ and $g \in G_2$. In this case $g$ is a 
northpole-southpole action along latitudes post-composed with a rotation. Since $h$ is area-preserving if and only if $g$ is area-preserving, which is the case if and only if $g$ is a 
pure rotation and has no proper northpole-southpole action, $h$ itself is a composition of rotations, and thus a rotation as required.
\end{proof}

\begin{prop}\label{prop_rot_mob_max}
The arrow $\Rot(S^2) \lra \Mobius(S^2)$ is complete.
\end{prop}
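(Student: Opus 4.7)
The plan is to show that any closed subgroup $K$ with $\Rot(S^2) \subsetneq K \subseteq \Mobius(S^2)$ must equal $\Mobius(S^2)$. By Lemma~\ref{lem_conf_lin} and Lemma~\ref{lem_mob_area_intersect}, every $h \in \Mobius(S^2)\setminus \Rot(S^2)$ already fails to be antipodal and fails to preserve area, so the symmetry hypothesis in the definition of a complete arrow is automatically satisfied once $\Rot(S^2) \subsetneq K$.

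First, pick $h \in K \setminus \Rot(S^2)$ and apply Lemma~\ref{lem_stab_two_isom} to write $h = r_1 g r_2$ with $r_1, r_2 \in \Rot(S^2)$ and $g \in \Mobius(S^2)$ fixing the pair $\{0,\infty\}$; then $g \in K$. Any such $g$ is a rotation about the vertical axis composed with a dilation $D_\lambda \colon z \mapsto \lambda z$ for some $\lambda > 0$. Since the rotation part lies in $\Rot(S^2) \subseteq K$, we deduce $D_{\lambda_0} \in K$ for some $\lambda_0 > 1$.

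The crux is then to upgrade $D_{\lambda_0}$ to the full one-parameter subgroup $\{D_\lambda : \lambda > 0\}$. Let $\Lambda := \{\lambda > 0 : D_\lambda \in K\}$; this is a closed subgroup of $(\R^+,\cdot)$ containing $\lambda_0 > 1$, hence either $\Lambda = \lambda_0^{\Z}$ or $\Lambda = \R^+$. Define $\mu : \Rot(S^2) \to [1,\infty)$ by letting $\mu(r)$ be the scale factor in the Cartan decomposition of $D_{\lambda_0}\, r\, D_{\lambda_0}$, equivalently the square of the larger singular value of its $\SL(2,\C)$-lift; since the two Cartan factors are absorbed into $\Rot(S^2) \subseteq K$, one has $D_{\mu(r)} \in K$, i.e.\ $\mu(r) \in \Lambda$. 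The map $\mu$ is continuous in $r$, with $\mu(\mathrm{id}) = \lambda_0^2$. The rotation $r_\pi$ by angle $\pi$ about the horizontal axis acts as $z \mapsto 1/z$ and swaps the attracting and repelling fixed points of $D_{\lambda_0}$; a short matrix computation gives $D_{\lambda_0}\, r_\pi\, D_{\lambda_0} = r_\pi$, so $\mu(r_\pi) = 1$. Applying the intermediate value theorem to a path in the connected group $\Rot(S^2)$ joining $\mathrm{id}$ to $r_\pi$ yields $[1,\lambda_0^2] \subseteq \Lambda$, which is uncountable and hence rules out $\Lambda = \lambda_0^{\Z}$, forcing $\Lambda = \R^+$.

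Finally, combining $\Rot(S^2) \subseteq K$ with $\{D_\lambda : \lambda > 0\} \subseteq K$ and the Cartan decomposition $\Mobius(S^2) = \Rot(S^2) \cdot \{D_\lambda : \lambda > 0\} \cdot \Rot(S^2)$, which is a special case of Lemma~\ref{lem_stab_two_isom} applied inside $\Mobius(S^2)$, gives $K = \Mobius(S^2)$. The main obstacle I expect is making the function $\mu$ rigorously continuous and carrying out the identity $D_{\lambda_0}\, r_\pi\, D_{\lambda_0} = r_\pi$ cleanly; both are elementary in $\SU(2)$-matrix coordinates, the second reflecting the conceptual fact that $r_\pi$ conjugates $D_{\lambda_0}$ to its inverse.
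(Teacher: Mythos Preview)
Your argument is correct and cleaner than the paper's, but it follows a genuinely different route. The paper proceeds geometrically: starting from a non-isometric $g \in \Mobius(S^2)$, it conjugates a rotation to produce an elliptic element preserving a great circle, restricts to the bounded hemisphere $\HH$, and reduces (Lemma~\ref{lem_mob_disk_max}) to the fact that $\langle \SO(2,\R), A\rangle = \SL(2,\R)$ for any non-orthogonal $A$. It then reconstructs every elliptic, hyperbolic, and parabolic element of $\Mobius(S^2)$ by explicit conjugation (Proof of Proposition~\ref{prop_rot_mob_max}). Your approach instead works directly with the global Cartan decomposition $\Mobius(S^2) = \Rot(S^2)\cdot\{D_\lambda\}\cdot\Rot(S^2)$: you extract one dilation $D_{\lambda_0}$, and then the intermediate-value trick with $r\mapsto \mu(r)$ along a path from $\id$ to $r_\pi$ forces the closed subgroup $\Lambda\subset\R^+$ to contain an interval, hence to be all of $\R^+$. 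This bypasses both the hemisphere reduction and the case-by-case synthesis of M\"obius elements, at the cost of invoking the continuity of the Cartan projection, which as you note is elementary via singular values.

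One small point to tighten: Lemma~\ref{lem_stab_two_isom} only gives $g$ preserving the \emph{pair} $\{0,\infty\}$, so a priori $g$ could swap the poles, i.e.\ $g(z)=a/z$. You should remark that in that case one composes with $r_\pi \in \Rot(S^2)\subseteq K$ to land in the case $g(z)=az$ before splitting off the dilation. With that one-line fix the argument is complete.
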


The proof of Proposition~\ref{prop_rot_mob_max} is a direct construction in line with the proof for the case of the linear group. Let $G = \langle \Rot(S^2), g \rangle$ denote the 
group generated by the rotation group $\Rot(S^2)$ and any $g \in \Mobius(S^2)$ which is not an isometry. In the following lemma, denote $\HH \subset S^2$ the upper hemisphere 
defined as the connected component containing $\infty$ of $S^2 \setminus \gamma$, with $\gamma \subset S^2$ the horizontal geodesic relative to $0$ and $\infty$.

\begin{lem}\label{lem_meridian_trans}
The group $G \subseteq \Mobius(S^2)$ contains a subgroup $H \subset G$ of transformations leaving invariant the northern hemisphere $\HH \subset S^2$, 
where $H$ contains the subgroup $\langle \Rot(\HH), h \rangle$, where $h \in \Mobius(\HH) \setminus \Rot(\HH)$.
\end{lem}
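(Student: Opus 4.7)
The plan is to apply Lemma~\ref{lem_stab_two_isom} to the non-isometric element $g$ generating $G$ in order to produce an element of $G$ that fixes the antipodal pair $\{0,\infty\}$ setwise, and then to strip off the rotational factors so that what remains is a pure dilation in stereographic coordinates. Since such a dilation preserves every circle centered at the origin of the stereographic chart, it will automatically preserve the equator and hence the hemisphere $\HH$.

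Concretely, I would first write $g = r_1 h_0 r_2$ with $r_1, r_2 \in \Rot(S^2)$ and $h_0 \in G_2$ as furnished by Lemma~\ref{lem_stab_two_isom}. Since $\Rot(S^2) \subset G$, the element $h_0 = r_1^{-1} g r_2^{-1}$ lies in $G$, and it cannot be a rotation, because then $g$ itself would be a composition of rotations. Identifying $S^2 = \C \cup \{\infty\}$ via stereographic projection with $0,\infty$ the north and south poles, the M\"obius maps that fix $\{0,\infty\}$ setwise are either of the form $z \mapsto \lambda z$ or $z \mapsto \lambda/z$ for some $\lambda \in \C^*$. In the first case, writing $\lambda = |\lambda|e^{i\theta}$, the map $\rho_\theta(z) = e^{i\theta}z$ is the rotation of $S^2$ about the vertical axis, so $D := \rho_\theta^{-1} \circ h_0 \in G$ is the pure dilation $z \mapsto |\lambda| z$. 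In the second case, I additionally compose with the involution $\iota(z) = 1/z$, which is a rotation of $S^2$ by $\pi$ about the equatorial axis through $\pm 1$ (it fixes $\pm 1$ and swaps $\pm i$), to reduce to the previous case. Because $h_0$ is not a rotation, $|\lambda| \neq 1$, so the resulting $D$ is not a rotation of $S^2$ either.

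Finally, the pure dilation $D(z) = |\lambda| z$ preserves every Euclidean circle $|z|=c$ in the stereographic chart, in particular the equator $|z|=1$, so $D$ leaves $\HH$ invariant. Setting $H := \stab_G(\HH)$, this is a subgroup of $G$ that contains $D$ together with the full circle $\Rot(\HH) \subset \Rot(S^2)$ of rotations about the vertical axis. The restriction $h := D|_{\HH}$ is a M\"obius transformation of $\HH$ not lying in $\Rot(\HH)$, since $D$ itself is not an isometry of $S^2$. Hence $H \supset \langle \Rot(\HH), h \rangle$ with $h \in \Mobius(\HH) \setminus \Rot(\HH)$, as required. The only real subtlety is verifying that $z \mapsto 1/z$ is a rotation of $S^2$ and that the rotation locus inside the stabilizer of $\{0,\infty\}$ is precisely $|\lambda|=1$, so that a non-rotation $h_0$ necessarily produces a non-trivial $D$; both are direct calculations in the stereographic model.
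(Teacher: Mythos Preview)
Your argument contains a genuine error at the key step. You claim that the pure dilation $D(z) = |\lambda|\, z$ ``preserves every Euclidean circle $|z|=c$ in the stereographic chart, in particular the equator $|z|=1$.'' This is false: $D$ sends $\{|z|=c\}$ to $\{|z|=|\lambda|\,c\}$, so for $|\lambda|\neq 1$ the equator is \emph{not} preserved and $D$ does not leave $\HH$ invariant.

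The approach is easily repaired. What $D$ does preserve is every \emph{line} through the origin in the stereographic chart, and these correspond on $S^2$ to the meridian great circles through $0$ and $\infty$. In particular $D$ preserves the meridian $\R\cup\{\infty\}$ and hence the hemisphere it bounds (the upper half-plane), on which it acts as a hyperbolic, hence non-rotational, M\"obius transformation. Conjugating by a rotation of $S^2$ carrying this meridian to the equator $\gamma$ produces the desired $h \in G$ preserving $\HH$ with $h \in \Mobius(\HH)\setminus\Rot(\HH)$; the circle $\Rot(\HH)$ is already available since $\Rot(S^2)\subset G$.

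With this correction your route is valid and genuinely different from the paper's. The paper does not invoke Lemma~\ref{lem_stab_two_isom}; instead it conjugates a latitude rotation $r$ by $g$ to obtain an elliptic element $g_0=grg^{-1}$, and then argues via an intermediate-value step that among the images of the horizontal latitudes under $g$ exactly one is a great circle, which $g_0$ must leave invariant. Your argument, once fixed, is more direct and manufactures an invariant great circle by explicit algebra in the stereographic model, at the cost of appealing to the hairy-ball Lemma~\ref{lem_stab_two_isom}, which the paper's proof of this lemma avoids.
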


\begin{proof}
Since $\Ant(S^2) \cap \Mobius(S^2) = \Rot(S^2)$, adding $g \in \Mobius(S^2)$ which is not an isometry, $g$ has the property that there exist points $p \in S^2$ such that $h(-p) \neq -h(p)$ 
by Lemma~\ref{lem_conf_lin}. Take a rotation $r \in \Rot(S^2)$ around $0$ and $\infty$ and conjugate $r$ with $g$ to obtain an elliptic transformation $g_0 \in \Mobius(S^2)$ which is not 
an isometry. As $r$ acts along the leaves of the horizontal latitudes, and with $g \in \Mobius(S^2)$ sending circles to circles, there exists a horizontal latitude that $g$ sends to a great circle 
in $S^2$, and since the original horizontal latitudes foliates the sphere $S^2$, and the image foliation under $g$ consists of circles, there is a unique great circle in the image under $g$ of 
the latitudinal foliation. Since $r$ leaves each horizontal latitude invariant, the induced elliptic transformation $g_0 \in \Mobius(S^2)$ leaves invariant a unique great circle. 
Conjugating $g_0$ with a suitable rotation in $\Rot(S^2)$, we may assume that the invariant geodesic equals $\gamma \subset S^2$. In that case, $g_0$ leaves invariant the upper 
hemisphere $\HH$ containing $\infty$ and the induced action on $\gamma$ by $g_0$ is not isometric. The subgroup $\Rot(S^2)$ of rotations leaving invariant $\gamma$, which is 
isomorphic to $\SO(2, \R)$, acts isometrically on $\gamma$. Therefore, $G$ contains a subgroup $H \subset G$ acting on $\gamma$ with the required properties. 
\end{proof}

\begin{lem}\label{lem_mob_disk_max}
In the notation of the previous lemma, we have that $H := \langle \Rot(\HH), h \rangle = \Mobius(\HH)$, for any $h \in \Mobius(\HH)$ which is not an isometry.
\end{lem}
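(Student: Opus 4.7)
The plan is to identify $\Mobius(\HH)$ with $\PSL(2,\R)$ acting on the hemisphere regarded as the hyperbolic disk; under this identification $\Rot(\HH)$ becomes the maximal compact subgroup $K \cong \SO(2,\R)$ stabilising the centre of $\HH$. My main tool will be the Cartan ($KAK$) decomposition of $\PSL(2,\R)$: every $g \in \Mobius(\HH)$ factors as $g = r_1 a_s r_2$ with $r_1, r_2 \in \Rot(\HH)$ and $a_s$ lying in a fixed one-parameter hyperbolic subgroup $A = \{a_s\}_{s \in \R}$ whose axis is a preferred diameter of $\HH$.

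First I would apply this decomposition to $h$ itself. Writing $h = r_1 a_{s_0} r_2$ with $r_1, r_2 \in \Rot(\HH) \subset H$, the hypothesis $h \in H$ forces $a_{s_0} \in H$, and because $h$ is not an isometry the translation parameter satisfies $s_0 \neq 0$. Iterating then places the full cyclic family $\{a_{n s_0}\}_{n \in \Z}$ inside $H$.

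Next, in the spirit of Lemma~\ref{lem_rotation_SL_three_full}, I would enlarge this discrete family to all of $A$ by a conjugation argument. For a rotation $r_\theta \in \Rot(\HH)$, the conjugate $r_\theta a_{s_0} r_\theta^{-1}$ is hyperbolic with the same translation length $s_0$ but with axis rotated by an amount that varies continuously with $\theta$. Composing two such conjugates with $a_{-s_0}$ yields elements of $H$ whose translation length depends continuously on $\theta$ and tends to $0$ as $\theta \to 0$; after undoing the axis rotation by another $\Rot(\HH)$-factor (a second $KAK$-reduction), one obtains a family $a_{s(\theta)} \in H$ with $s(\theta) \to 0$. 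Together with the lattice $\{n s_0\}_{n \in \Z}$ this produces a dense set of translation lengths, and passing to the closure $\overline{H}$, required anyway by the completeness convention of Proposition~\ref{prop_rot_mob_max}, we obtain $A \subset \overline{H}$.

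Finally, with $\Rot(\HH) \subset \overline{H}$ and $A \subset \overline{H}$, the $KAK$-factorisation applied to an arbitrary $g \in \Mobius(\HH)$ exhibits $g \in \overline{H}$, whence $\overline{H} = \Mobius(\HH)$. The main obstacle is the third step: promoting the cyclic family $\{a_{n s_0}\}$ to the whole one-parameter subgroup $A$. The cleanest way to avoid the continuity bookkeeping in the commutator argument is to appeal to a Lie-algebraic fact: $\mathfrak{sl}(2,\R)/\mathfrak{so}(2,\R)$ is an irreducible $\mathfrak{so}(2,\R)$-module, so the Lie algebra of $\overline{H}$, which strictly contains $\mathfrak{so}(2,\R)$ as soon as $h \notin \Rot(\HH)$, must coincide with $\mathfrak{sl}(2,\R)$, giving $\overline{H} = \Mobius(\HH)$ directly.
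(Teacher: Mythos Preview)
Your main argument is essentially the paper's own: the paper conformally projects $\HH$ to the disk, identifies $\Mobius(\HH)$ with $\SL(2,\R)$ (it writes $\PGL(2,\R)$, which amounts to the same thing here) and $\Rot(\HH)$ with $\SO(2,\R)$, and then simply says the claim ``follows as in the case of $\SL(3,\R)$ readily from the singular value decomposition theorem'', i.e.\ from Lemma~\ref{lem_rotation_SL_three_full}; your iterate--rotate--compose--$KAK$-reduce scheme is precisely that argument written out in the rank-one case.

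One small remark: the passage to $\overline{H}$ is not actually needed. As in the proof of Lemma~\ref{lem_rotation_SL_three_full}, the singular values of your rotated compositions vary \emph{continuously} in $\theta$ (from $1$ at $\theta=0$ up to $\lambda^{2}$ when the axis is rotated through a right angle), so a second $KAK$ reduction already places $a_s \in H$ for every $s$ in a full interval, and hence, after iterating, for every $s \in \R$; this is why the paper can state Lemma~\ref{lem_rotation_SL_three_full} for the abstract group $\langle \SO(3,\R), A\rangle$ without closure. Your Lie-algebraic alternative via the irreducibility of $\mathfrak{sl}(2,\R)/\mathfrak{so}(2,\R)$ as an $\mathfrak{so}(2,\R)$-module is a genuinely different and cleaner route that the paper does not take.
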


\begin{proof}
Indeed, project the hemisphere $\HH$ conformally on the unit disk $\D$ and denote again $\gamma = \partial \D$. The full M\"obius group $\Mobius(\D)$ is isomorphic to $\PGL(2,\R)$. 
In turn, the group $\PGL(2, \R)$ is isomorphic to $\SL(2, \R)$. The action $\PSO(2, \R)$ identifies in this isomorphism with $\SO(2, \R)$. Therefore, the statement reduces to the claim 
that $\SL(2, \R) = \langle \SO(2, \R), A \rangle$ for a non-orthogonal $A \in \SL(2, \R)$, which follows as in the case of $\SL(3,\R)$ readily from the singular value decomposition theorem.
\end{proof}

\begin{proof}[Proof of Proposition~\ref{prop_rot_mob_max}]
To produce the full group $\Mobius(S^2)$, we use Lemma~\ref{lem_meridian_trans} and the group $\Rot(S^2)$. First, an elliptic element is determined by an axis of rotation $\alpha$ and a 
rotation angle $\theta \in \mathbb{S}^1$. The axis $\alpha$ is the unique hyperbolic geodesic in the three-dimensional round ball enclosed by the sphere $S^2$ passing through two given 
points $p,q \in S^2$. Unless $p$ and $q$ are antipodal, in which case the corresponding elliptic element is contained in $\Rot(S^2)$, define $\gamma$ to be the unique great circle passing 
through $p$ and $q$. By Lemma~\ref{lem_meridian_trans}, there exists $g \in \Mobius(S^2)$ leaving invariant $\gamma$ and sending $p$ and $-p$ to $p$ and $q$ respectively. Conjugating 
the rotation $r_{\theta} \in \Rot(S^2) \subset \Mobius(S^2)$ by $g \in \Mobius(S^2)$ produces the desired elliptic element. A hyperbolic element in $\Mobius(S^2)$ is determined uniquely by 
the translation axis passing through two different points $p,q \in S^2$, the translation length and the rotation angle around the axis. Given a translation length $T$, 
by Lemma~\ref{lem_mob_disk_max}, there exists a hyperbolic transformation $\Mobius(\D)$ with translation length $T$ and whose translation axis is a geodesic passing through the origin 
of $\D$. Taking the double of this transformation, we obtain a hyperbolic element in $\Mobius(S^2)$ with an axis whose endpoints are antipodal, which we may assume upon a rotation, 
to be $\infty$ and $0$. Applying again Lemma~\ref{lem_meridian_trans}, we can find $g \in \Mobius(S^2)$ sending $0, \infty$ to $p,q$ respectively. Conjugating the original hyperbolic 
element with antipodal fixed points with $g$, one obtains a hyperbolic transformation in $\Mobius(S^2)$ with the required axis and translation length. Precomposing this transformation 
with a rotation of angle $\theta \in \mathbb{S}^1$ around $\infty$ and $0$, one obtains the required general hyperbolic (or loxodromic) element. Parabolic elements are constructed similarly 
by passing to the double and conjugating.
\end{proof}

\subsection{Arrows in the infinite-dimensional case}\label{sec_struc_diagram}

The purpose of this section is to prove completeness of the arrows marked with $\star$ in the diagram involving the infinite-dimensional transformation groups.
This will show how, beyond the arrows in the finite-dimensional case, perturbing the symmetries of one group leads in a constructive way to generating a much larger group. 

\begin{prop}\label{thm_area_max}
The area-preserving group $\Homeo_{\lambda}(S^2)$ is maximal in $\homeo(S^2)$.
\end{prop}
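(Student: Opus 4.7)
The plan is to translate the maximality question into a transitivity statement on the space of good Borel measures on $S^2$ via the Oxtoby-Ulam theorem. Suppose $G$ is a closed subgroup with $\Homeo_{\lambda}(S^2) \subsetneq G \subseteq \homeo(S^2)$, and pick $g \in G \setminus \Homeo_{\lambda}(S^2)$; then the pushforward $\mu := g_{*}\lambda$ is a nonatomic Borel probability measure of full support, distinct from $\lambda$. By Oxtoby-Ulam, the map $k \mapsto k_{*}\lambda$ gives a bijection between the left coset space $\homeo(S^2)/\Homeo_{\lambda}(S^2)$ and the set $\mathcal{M}$ of all such good measures, and under this identification $G = \homeo(S^2)$ is equivalent to showing that the orbit $\mathcal{O}_G := \{k_{*}\lambda : k \in G\}$ equals $\mathcal{M}$. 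This orbit is invariant under the $G$-action by pushforward (in particular under the $\Homeo_{\lambda}$-action) and is closed under uniform limits of the underlying homeomorphisms, since $G$ is closed in the uniform topology.

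First I would perform a \emph{localization step}: using $g$ and area-preserving maps in $\Homeo_{\lambda}(S^2) \subseteq G$, construct an element $g_0 \in G$ that is the identity outside an arbitrarily small topological disk $D_0 \subset S^2$ and is not area-preserving on $D_0$. By the Alpern-Prasad strengthening of Oxtoby-Ulam one can find an $h \in \Homeo_{\lambda}$ matching the behavior of $g$ outside a suitably chosen disk, so that $h^{-1}g \in G$ has support in the disk and inherits the nontrivial area distortion. Conjugating by elements of $\Homeo_{\lambda} \subseteq G$ then supplies analogous localized non-area-preserving elements supported in arbitrary small disks throughout $S^2$, realizing essentially any prescribed local distortion.

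Next I would apply a \emph{fragmentation step}: every $f \in \homeo(S^2)$ admits a factorization $f = f_1 \cdots f_N$ with each $f_i$ supported in a small topological disk. Using the localized elements above together with area-preserving corrections from $\Homeo_{\lambda}$, each $f_i$ can be written as an element of $G$ times an area-preserving map and hence lies in $G$. Thus $f \in G$ and $G = \homeo(S^2)$.

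The main obstacle is the localization step: passing from the globally non-area-preserving $g$ to an element of $G$ with non-area-preserving behavior concentrated in a small disk. Naively composing $g$ with the area-preserving map that agrees with $g^{-1}$ outside the target disk only works if $g$ is already area-preserving on that complement, which need not hold. Overcoming this requires a more delicate iterative approximation: first one matches $g|_{S^2 \setminus D}$ by an area-preserving extension, using Oxtoby-Ulam on the complementary disk, and then one recursively localizes the remaining discrepancy into smaller and smaller disks, invoking the closedness of $G$ in the uniform topology to pass to the limit. Once the localization is in place, fragmentation and the closure of $G$ yield $G = \homeo(S^2)$.
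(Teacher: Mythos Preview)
Your overall architecture---localize the non-area-preserving behaviour into a small disk, then fragment an arbitrary homeomorphism into locally supported pieces and absorb each by an element of $G$---is exactly the paper's strategy. The paper implements it concretely via an $\epsilon$-grid $\Gamma(\epsilon)$: one shows that for any $h$ the warped grid $h(\Gamma(\epsilon))$ can be carried back to $\Gamma(\epsilon)$ by a composition of ``elementary moves'' in $G$, each supported in a small good disk and sending a given good arc to another with the same endpoints (Lemma~\ref{lem_elem_move}). This is the fragmentation step in explicit form.

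Where your proposal is genuinely incomplete is in the two steps you yourself flag. For localization you suggest finding $h\in\Homeo_\lambda$ that ``matches the behavior of $g$ outside a suitably chosen disk'' so that $h^{-1}g$ is supported there; but as you note, this cannot be done directly since $g$ need not be area-preserving on any complement, and the ``iterative approximation'' you invoke is not spelled out and is not obviously convergent in a way that keeps support in a fixed small disk. The paper sidesteps this completely with a one-line conjugation trick (Lemma~\ref{lem_jordan_oxtoby_4}): take two small good disks $D_1,D_2$ of equal area whose $g$-images have unequal area, build an area-preserving $h_0$ supported in a small disk $D_0\supset D_1\cup D_2$ that swaps $D_1$ and $D_2$, and set $g h_0 g^{-1}\in G$; this is supported in $g(D_0)$ and has non-trivial area gradient. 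No approximation or Alpern--Prasad input is needed.

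The second gap is the assertion that conjugating such localized elements ``realiz[es] essentially any prescribed local distortion.'' Conjugation only moves a fixed distortion around; it does not by itself produce arbitrary area transport. The paper fills this in by an explicit area-pumping argument (proof of Lemma~\ref{lem_elem_move}): using Lemmas~\ref{lem_jordan_oxtoby_2}--\ref{lem_jordan_oxtoby_3} one positions the two small disks on opposite sides of the target arc, attaches thin tubes so that applying the localized non-area-preserving element transfers a \emph{definite} amount of mass across the arc, and iterates/adjusts to hit any prescribed balance. Without something of this sort, your fragmentation step---writing each $f_i$ as an element of $G$ times an area-preserving map---is an assertion, not an argument. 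In short, your outline is correct and close to the paper's, but the two hard mechanisms (cheap localization via conjugation, and controlled area transport across an arc) are precisely where the content lies and are missing from your sketch.
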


A good arc or good curve $\eta, \gamma \subset S^2$ is an arc or curve that has zero Lebesgue measure and a good disk is a closed topological disk whose boundary curve is 
good curve. Since measure is additive over disjoint sets, a generic, in the Hausdorff topology, curve is a good curve, and similarly with arcs. Consequently, arbitrarily 
close to a disk $D \subset S^2$, we can find a good disk with the same Lebesgue measure. To prove the result, use the Oxtoby-Ulam integration theorem~\cite{oxtoby}. 
An Oxtoby-Ulam measure is a finite Borel measure, that assigns zero measure to points and positive
measure to sets with interior. By the Oxtoby-Ulam integration theorem, given two Oxtoby-Ulam measures $\mu_1, \mu_2$ on $S^2$ with unit total mass, there exists a 
homeomorphism $h \in \homeo(S^2)$ such that $h_*(\mu_1) = \mu_2$. In our setup, we use $\mu_1 = \mu_2 = \lambda$, where $\lambda$ denotes the standard Lebesgue 
measure on the sphere. An alternative result by Oxtoby-Ulam, namely the Oxtoby-Ulam extension theorem, states that given a closed topological disk $D$ and a 
homeomorphism of the boundary $\partial D$, there exists an Lebesgue measure preserving homeomorphism of the disk $D$ onto itself with the prescribed boundary values.

An $\epsilon$-grid $\Gamma(\epsilon) \subset S^2$ on the sphere is defined the union of parallel meridians and parallel latitudes such that the complementary disks
of the grid in the sphere are cells of diameter at most $\epsilon$. Let $G$ be a closed subgroup of $\homeo(S^2)$ properly containing $\Homeo_{\lambda}(S^2)$.
The argument consists in showing that, given $h \in \homeo(S^2)$, and given $\epsilon>0$, the warped image grid $h(\Gamma(\epsilon))$ can be moved back to its 
original position using homeomorphisms in $G$. By a diagonal argument, we may assume that the homeomorphism we need to approximate is a homeomorphism
that preserves Lebesgue null sets, for example by taking diffeomorphisms or piecewise linear homeomorphisms, which are dense in $\homeo(S^2)$.
In what follows, a homeomorphism $h \in \homeo(S^2)$ is said to have trivial area gradient on a domain $U \subseteq S^2$ if there exists a constant $c \in (0, \infty)$ 
such that $\meas(h(X)) = c \meas(X)$, for each measurable $X \subset U$, and $h$ is said to have non-trivial area gradient on $U$ otherwise. 

\begin{lem}[Elementary move]\label{lem_elem_move}
For sufficiently small $\delta >0$, given a good disk $D \subset S^2$ of area at most $\delta$, and given two good arcs $\eta_1, \eta_2 \subset D$ having the same two endpoints 
in the boundary of $D$, there exist $g \in G$ such that $g$ is the identity on $S^2 \setminus D$ and $g(\eta_1) = \eta_2$.
\end{lem}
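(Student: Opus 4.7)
The plan is a reduction that splits into an area-matched case and an area-adjustment step. Let $D_i^+, D_i^-$ denote the two closed subdisks of $D$ bounded by $\eta_i$ together with the relevant arc of $\partial D$, so that $D = D_i^+ \cup D_i^-$. I would first dispose of the area-matched case $\lambda(D_1^+) = \lambda(D_2^+)$ by applying the Oxtoby-Ulam extension theorem separately to $D_1^+ \to D_2^+$ and $D_1^- \to D_2^-$, with matching boundary homeomorphisms that agree along $\partial D$ (the identity there) and along $\eta_1 \to \eta_2$ (the unique orientation-preserving correspondence pinning endpoints). Gluing yields an area-preserving homeomorphism of $D$ with identity boundary values, extended by the identity off $D$; since $\Homeo_{\lambda}(S^2) \subseteq G$, this produces the required $g$.

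To handle the general case, I would first construct an auxiliary $g_1 \in G$ supported on $D$ so that $\eta_3 := g_1(\eta_1)$ realizes the same area split as $\eta_2$, and then finish by the area-matched case, setting $g := g_2 g_1$ where $g_2$ is obtained from stage one. Producing $g_1$ requires elements of $G$ supported on $D$ that do not preserve Lebesgue area. Starting from $h_0 \in G \setminus \Homeo_{\lambda}(S^2)$, which exists by the hypothesis on $G$, I would use the Oxtoby-Ulam integration theorem to choose $\phi \in \Homeo_{\lambda}(S^2) \subseteq G$ so that the conjugate $h := \phi h_0 \phi^{-1} \in G$ has its area defect concentrated in a small good disk $B \subset D$ lying to one side of $\eta_1$. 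Forming a commutator $[h,\psi] \in G$ with an area-preserving $\psi$ supported in a slightly larger good disk $B' \subset D$ that moves $B$ across $\eta_1$ into $D_1^-$ produces an element whose support is contained in $B' \cup h(B')$ (the pieces of $h$ lying where $\psi$ is the identity cancel in the commutator) and which transports a small positive amount of Lebesgue measure across $\eta_1$. Iterating, composing with area-preserving homeomorphisms of $D$, and using that $G$ is closed so that a $C^0$-limit of such compositions still belongs to $G$, one exhibits $g_1 \in G$ with the desired area effect, identity on $\partial D$, and supported on $D$.

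The main obstacle is carrying out this localization cleanly. The delicate point is to guarantee that the commutator really is supported inside $D$: this requires selecting $\phi$ and $\psi$ so that either $h = \phi h_0 \phi^{-1}$ has support essentially contained in $D$, or at least so that the portion of $h$ lying outside $D$ commutes with $\psi$ and hence disappears in $[h,\psi]$. The smallness of $\delta$ enters precisely here, ensuring that the required area adjustment $|\lambda(D_1^+) - \lambda(D_2^+)|$ is bounded by what can be transported by the construction, and that the iterates of the commutator fit comfortably inside $D$ without crossing $\partial D$. Once this localization step is secured, the remainder is a routine area-bookkeeping argument together with the Oxtoby-Ulam extension theorem from the first stage.
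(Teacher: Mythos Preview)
Your overall two-stage plan (reduce to the area-matched case via Oxtoby--Ulam extension, then manufacture an area-adjusting element of $G$ supported on $D$) is exactly the structure the paper uses; your treatment of the area-matched case coincides with the paper's Lemma~\ref{lem_jordan_oxtoby_2}. The divergence, and the gap, is in the second stage.

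Your proposed localization has a genuine hole. Conjugating $h_0 \in G \setminus \Homeo_\lambda(S^2)$ by an area-preserving $\phi$ \emph{moves} the region of non-trivial Jacobian but cannot ``concentrate'' it: since $J_{\phi h_0 \phi^{-1}}(\phi(x)) = J_{h_0}(x)$, the defect locus of $h$ is just $\phi$ applied to that of $h_0$, so invoking Oxtoby--Ulam here does not do what you claim. Your fallback, the commutator $[h,\psi]$ with $\psi$ supported in $B' \subset D$, does have support in $B' \cup h(B')$ as you say --- but nothing you have written forces $h(B') \subset D$. Without that containment the commutator is not supported in $D$ and the construction fails. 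This can be repaired (e.g.\ by choosing $B'$ near a point $q$ with non-trivial area gradient and using $2$-transitivity of $\Homeo_\lambda(S^2)$ to place both $\phi(q)$ and $\phi(h_0(q))$ inside $D$), but you have not supplied such an argument, and you yourself flag this as ``the main obstacle'' without resolving it.

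The paper sidesteps this difficulty with a cleaner device (Lemma~\ref{lem_jordan_oxtoby_4}): rather than localizing a given non-area-preserving element, it starts from an \emph{area-preserving} homeomorphism $h_0 \in \Homeo_\lambda(S^2)$ supported in a small disk $D_0$ that interchanges two small subdisks $D_1, D_2 \subset D_0$ of equal area, and then conjugates by some $g \in G$ with $\lambda(g(D_1)) \neq \lambda(g(D_2))$. The conjugate $g h_0 g^{-1}$ is automatically supported in $g(D_0)$ and is visibly not area-preserving, since it carries $g(D_1)$ to $g(D_2)$. This produces a compactly supported, non-area-preserving element of $G$ in one stroke, with no need to control where an a~priori global $h$ sends small disks. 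The paper then places the two small disks on opposite sides of $\eta_1$ (via Lemma~\ref{lem_jordan_oxtoby_3}) and uses a tube-modification of $\eta_1$ to extract a definite, controllable area transport across the arc. Your commutator route is salvageable, but the interchange-then-conjugate trick is both simpler and what actually closes the argument.
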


\begin{figure}[h]
\begin{center}
\psfrag{p_1}{$\infty$}
\psfrag{p_0}{$0$}
\psfrag{D}{$D$}
\psfrag{g_1}{$\eta_1$}
\psfrag{g_2}{$\eta_2$}
\psfrag{h_0}{$g$}
\includegraphics[scale=0.3]{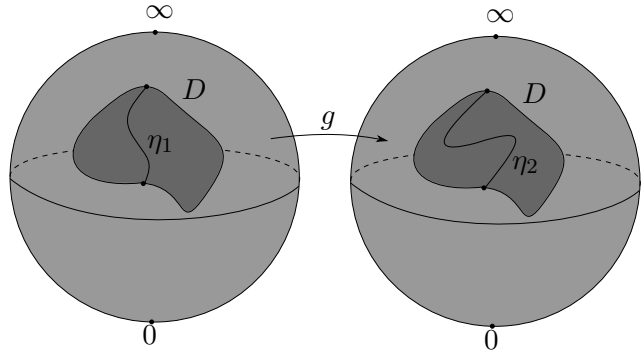}
\caption{The elementary move of Lemma~\ref{lem_elem_move}.}
\end{center}
\end{figure}

Lemma~\ref{lem_elem_move} is proved in several steps. 

\begin{lem}\label{lem_jordan_oxtoby}
Let $D_1, D_2 \subset S^2$ be two good disks of equal area. Then there exists $h \in \homeo_{\lambda}(S^2)$ such that $h(D_1) = D_2$.
\end{lem}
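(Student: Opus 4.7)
The plan is to first produce any homeomorphism of $S^2$ sending $D_1$ to $D_2$ (ignoring area) via Jordan–Schoenflies, and then to correct it to be area-preserving using the Oxtoby–Ulam theorems applied separately to $D_2$ and to its complement. The key observation that makes this work is that the hypothesis $\lambda(D_1)=\lambda(D_2)$, combined with both being good disks, guarantees the mass-matching needed to apply Oxtoby–Ulam on each piece.

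Concretely, I would first invoke the Jordan–Schoenflies theorem to pick $\phi \in \homeo(S^2)$ with $\phi(D_1)=D_2$; such a $\phi$ exists because $D_1,D_2$ are topological disks in $S^2$ and their boundary Jordan curves can be matched. Next I would consider the push-forward measure $\mu := \phi_{*}\lambda$ on $S^2$. Since $D_1,D_2$ are good disks, $\lambda(\partial D_i)=0$, so $\mu(\partial D_2)=\lambda(\partial D_1)=0$, and $\mu$ is an Oxtoby–Ulam measure (no atoms, positive on open sets). The crucial mass equality is
\[
\mu(D_2) = \lambda(\phi^{-1}(D_2)) = \lambda(D_1) = \lambda(D_2),
\]
and consequently also $\mu(S^2\setminus D_2) = \lambda(S^2\setminus D_2)$.

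Now I would apply the Oxtoby–Ulam integration theorem separately on the closed topological disk $D_2$ and on its complement $\overline{S^2\setminus D_2}$ (also a closed topological disk, again by Schoenflies). Using the boundary-preserving version of Oxtoby–Ulam — which follows from the Oxtoby–Ulam extension theorem by prescribing the identity on $\partial D_2$ — I obtain homeomorphisms $g_1 : D_2 \to D_2$ and $g_2 : \overline{S^2\setminus D_2} \to \overline{S^2\setminus D_2}$, each equal to the identity on the common boundary $\partial D_2$, with $(g_1)_{*}(\mu|_{D_2}) = \lambda|_{D_2}$ and $(g_2)_{*}(\mu|_{S^2\setminus D_2}) = \lambda|_{S^2\setminus D_2}$. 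Because they agree on $\partial D_2$, they glue to a global homeomorphism $g \in \homeo(S^2)$ with $g(D_2)=D_2$ and $g_{*}\mu = \lambda$. Setting $h := g\circ\phi$, we have $h_{*}\lambda = g_{*}(\phi_{*}\lambda) = g_{*}\mu = \lambda$, so $h \in \homeo_\lambda(S^2)$, and $h(D_1) = g(\phi(D_1)) = g(D_2) = D_2$, as desired.

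The main technical obstacle is the boundary-preserving form of Oxtoby–Ulam used in the gluing step: one must know that two Oxtoby–Ulam measures of equal total mass on a closed topological disk can be matched by a homeomorphism of the disk that is the identity on the boundary. This is a standard strengthening of the integration theorem and is exactly what the Oxtoby–Ulam extension theorem provides (apply it to the identity boundary map, after first matching the measures in the interior). The good-disk assumption is used precisely to guarantee that $\partial D_2$ is null for both $\lambda$ and $\mu$, so that no mass is lost on the gluing locus and the local Oxtoby–Ulam arguments do not obstruct each other.
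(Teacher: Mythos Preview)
Your argument is correct. The paper takes a different route: rather than first invoking Schoenflies and then correcting the measure piecewise, it applies the Oxtoby--Ulam integration theorem once, globally, to a pair of weighted measures $\mu_i$ defined as (a normalization of) $2\lambda$ on $S^2\setminus D_i$ and $\lambda$ on $D_i$. The paper then argues that a homeomorphism $h$ with $h_*\mu_1=\mu_2$ must carry $D_1$ to $D_2$ (because the two sides of each Jordan curve carry distinct density) and must be Lebesgue-preserving (because the densities on corresponding pieces agree). Your approach is more modular and transparent: Schoenflies handles the topology, and the relative (boundary-fixing) Oxtoby--Ulam theorem handles the measure on each of the two complementary disks, after which the pieces glue. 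The price you pay is needing the boundary-fixing version, which you correctly reduce to the extension theorem stated in the paper (conjugate so that the target measure on the disk is Lebesgue, integrate, then use the extension theorem to undo the boundary motion). The paper's approach avoids both Schoenflies and the explicit gluing, but its key inference---that the density discrepancy forces $h(D_1)=D_2$---is not automatic for an \emph{arbitrary} $h$ integrating $\mu_1$ to $\mu_2$ and tacitly relies on a more careful choice of $h$; your argument is in that sense the more self-contained of the two.
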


\begin{proof}
Let $\gamma_1$ and $\gamma_2$ be the two simple closed curves of measure zero comprising the boundary of the closed topological disks $D_1$ and $D_2$. 
By the Jordan curve theorem, the simple closed curves divides both $S^2 \setminus \gamma_1$ and $S^2 \setminus \gamma_2$ into precisely two connected components, 
and each point of $\gamma_1$ lies on the common boundary of each component, and similarly with $\gamma_2$.
Let $m_D$ denote the Lebesgue measure of both $D_1$ and $D_2$, which are equal in measure, and let $m_S$ denote the Lebesgue measure of $S^2 \setminus D_1$
and $S^2 \setminus D_2$, also equal in measure. Construct the Oxtoby-Ulam measure $\mu_1$ defined as twice the Lebesgue measure $2 \lambda$ on $S^2 \setminus D_1$
and the Lebesgue measure on $D_1$, normalized so that the entire sphere has unit area. Similarly, define the measure $\mu_2$ in the exact same manner but now on 
$S^2 \setminus D_2$ and $D_2$. By the Oxtoby-Ulam integration theorem, there exists a homeomorphism $h \in \homeo(S^2)$ that integrates 
the measure $\mu_1$ to $\mu_2$. In particular, due to the separation property of the simple closed curves that bound the disk, and the distinct weight given to the inside of 
the disk and the outside, combined with $h$ integrating $\mu_1$ to $\mu_2$, it follows that $h(D_1) = D_2$. Furthermore, combining that $h$ integrates $\mu_1$ to $\mu_2$
with the preservation of the weights by construction of the mapping, $h$ preserves the Lebesgue measure on a full measure set of $S^2$ and therefore 
$h \in \homeo_{\lambda}(S^2)$, as required.
\end{proof}

\begin{lem}\label{lem_jordan_oxtoby_2}
Given any good disk $D \subset S^2$, and given a pair of good arcs $\eta_1, \eta_2 \subset D$ with the same endpoints and dividing $D$ into two 
subdisks with equal corresponding area, $g \in \homeo_{\lambda}(S^2)$ supported on $D$ such that $g(\eta_1) = \eta_2$.
\end{lem}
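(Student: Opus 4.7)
The strategy is to split $D$ by both arcs and reduce to two independent applications of an Oxtoby--Ulam type statement, one per side. Let $p,q \in \partial D$ be the common endpoints of $\eta_1, \eta_2$, so $\partial D = \alpha^+ \cup \alpha^-$ with $\alpha^+ \cap \alpha^- = \{p,q\}$. For $i = 1, 2$, let $D_i^{\pm}$ denote the two subdisks of $D$ cut off by $\eta_i$, labelled so that $\alpha^{\pm} \subset \partial D_i^{\pm}$. The hypothesis gives $\area(D_1^+) = \area(D_2^+)$ and $\area(D_1^-) = \area(D_2^-)$. Since $\partial D$, $\eta_1$, $\eta_2$ are all good, each $\partial D_i^{\pm}$ is a good Jordan curve. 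The goal is to build an area-preserving homeomorphism $g$ of $D$, identity on $\partial D$, with $g(D_1^{\pm}) = D_2^{\pm}$ and $g(\eta_1) = \eta_2$, and then extend by the identity on $S^2 \setminus D$.

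Fix any orientation-preserving homeomorphism $\phi : \eta_1 \to \eta_2$ with $\phi(p) = p$, $\phi(q) = q$, and define $\phi^{\pm} : \partial D_1^{\pm} \to \partial D_2^{\pm}$ to equal the identity on $\alpha^{\pm}$ and to equal $\phi$ on $\eta_1$. The crux is then to extend each $\phi^{\pm}$ to an area-preserving homeomorphism $g^{\pm} : D_1^{\pm} \to D_2^{\pm}$. I would do this by combining the two Oxtoby--Ulam theorems exactly as follows. First, mimicking the weighted-measure argument of Lemma~\ref{lem_jordan_oxtoby} applied inside a suitable auxiliary sphere (or directly by doubling along the good curve $\partial D_1^{\pm}$), produce \emph{some} area-preserving homeomorphism $h^{\pm} : D_1^{\pm} \to D_2^{\pm}$. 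Its boundary restriction $\psi^{\pm} := h^{\pm}|_{\partial D_1^{\pm}}$ is some homeomorphism onto $\partial D_2^{\pm}$. The composition $\phi^{\pm} \circ (\psi^{\pm})^{-1}$ is a self-homeomorphism of the good Jordan curve $\partial D_2^{\pm}$, and by the Oxtoby--Ulam extension theorem it extends to an area-preserving self-homeomorphism $\Psi^{\pm}$ of $D_2^{\pm}$. Then $g^{\pm} := \Psi^{\pm} \circ h^{\pm}$ is area-preserving, sends $D_1^{\pm}$ onto $D_2^{\pm}$, and restricts on the boundary to $\phi^{\pm}$.

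Finally, define $g : S^2 \to S^2$ by $g^+$ on $D_1^+$, $g^-$ on $D_1^-$, and the identity on $S^2 \setminus D$. On $\alpha^{\pm} \subset \partial D$ both restrictions agree with the identity, and on $\eta_1$ both agree with $\phi$, so $g$ is a well-defined continuous bijection, hence a homeomorphism. It is Lebesgue measure preserving since each piece is and the gluing set has measure zero; it is supported on $D$ and satisfies $g(\eta_1) = \eta_2$, proving the lemma.

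The main obstacle is genuinely the intermediate statement used in the second paragraph, namely a \emph{boundary-prescribed} Oxtoby--Ulam isomorphism between two distinct good disks of equal area. Neither of the two cited Oxtoby--Ulam theorems delivers this directly, which is why the two-step factorisation $g^{\pm} = \Psi^{\pm} \circ h^{\pm}$ is needed: the integration theorem supplies measure-preservation together with the right image disk, and the extension theorem is then used solely to absorb the discrepancy between the forced boundary values and the prescribed ones. Once this boundary-prescribed version is in hand, the rest is purely a matter of topological book-keeping on $\partial D$ and $\eta_1$.
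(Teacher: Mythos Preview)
Your proof is correct and follows essentially the same route as the paper: split $D$ along $\eta_1$ and $\eta_2$, use Lemma~\ref{lem_jordan_oxtoby} together with the Oxtoby--Ulam extension theorem to obtain an area-preserving homeomorphism between the corresponding subdisks with prescribed boundary values, and glue. If anything you are more explicit than the paper about the factorisation $g^{\pm} = \Psi^{\pm}\circ h^{\pm}$, which the paper compresses into the single clause ``combining Lemma~\ref{lem_jordan_oxtoby} with the Oxtoby--Ulam extension theorem''; note also that you can apply Lemma~\ref{lem_jordan_oxtoby} directly to the pair $D_1^{\pm}, D_2^{\pm}$ rather than passing to an auxiliary sphere or doubling.
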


\begin{proof}
Combining Lemma~\ref{lem_jordan_oxtoby} with the Oxtoby-Ulam extension theorem, given two good closed topological disks $D_1$ and $D_2$ with the same area,
and a homeomorphism $\phi$ between their boundary curves, there exists an area-preserving homeomorphism of $D_1$ onto $D_2$
with boundary values $\phi$. Given the disk $D$ and $\eta_1 \subset D$, define $\beta_1, \beta_2 \subset \partial D$ the two components of $\partial D$ minus 
the two endpoints of $\eta_1$. Denote $D_1 \subset D$ the closed disk with circumference $\beta_1 \cup \eta_1$ and denote $\bar{D}_1 \subset D$ the 
closed disk with circumference $\beta_1 \cup \eta_2$. Define the closed topological disks $D_2$ with boundary $\eta_1 \cup \beta_2$ 
and $\bar{D}_2$ with boundary $\eta_2 \cup \beta_2$ analogously. By the extension theorem, we can find an area-preserving homeomorphism
$h_1$ of $D_1$ onto $\bar{D}_1$ such that $h_1$ extends as the identity on the boundary arc $\beta_1$ and extends as an arbitrary homeomorphism
on the boundary arc $\eta_1$. Similarly, we can find an area-preserving homeomorphism $h_2$ sending $D_2$ on $\bar{D}_2$ with the property
that $h_2$ extends as the identity on $\beta_2$ and as the same homeomorphism on $\eta_1$ used to define $h_1$
Since the homeomorphisms $h_1$ and $h_2$ glue together as an area-preserving homeomorphism $h \colon D \ra D$ which is the identity
on the boundary $\partial D$, defining $h$ to be the identity on the exterior of $D$ in the sphere $S^2$, $h$ extends to an area-preserving
homeomorphism in $\homeo_{\lambda}(S^2)$ with the properties as claimed.
\end{proof}

\begin{lem}\label{lem_jordan_oxtoby_3}
Given a good disk $D$ and good disks $D_1, D_2 \subset D$ and $\bar{D}_1, \bar{D}_2$, there exists an area-preserving homeomorphism
$h \in \homeo_{\lambda}(S^2)$ supported on $D$ such that $h(D_1) = \bar{D}_1$ and $h(D_2) = \bar{D}_2$.
\end{lem}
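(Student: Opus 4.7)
The plan is to iterate Lemma~\ref{lem_jordan_oxtoby} (and the Oxtoby--Ulam extension theorem) twice, first moving $D_1$ to $\bar D_1$, then moving the displaced $D_2$ to $\bar D_2$ through a homeomorphism supported in the complement of $\bar D_1$ in $D$. Throughout I assume the natural compatibility hypothesis that $D_1,D_2\subset D$ are pairwise disjoint good disks (and similarly for $\bar D_1,\bar D_2$) with $\lambda(D_i)=\lambda(\bar D_i)$ for $i=1,2$.

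First I would establish a disk version of Lemma~\ref{lem_jordan_oxtoby}: given two good disks $D_1,\bar D_1 \subset \inter(D)$ with $\lambda(D_1)=\lambda(\bar D_1)$, there exists an area-preserving homeomorphism supported on $D$ that sends $D_1$ to $\bar D_1$. The construction copies the proof of Lemma~\ref{lem_jordan_oxtoby} verbatim but on $D$: build an Oxtoby--Ulam measure $\mu_1$ on $D$ that equals $2\lambda$ on $D\setminus D_1$ and $\lambda$ on $D_1$ (normalized so that $\mu_1(D)=1$), define $\mu_2$ analogously with $\bar D_1$, and apply the Oxtoby--Ulam integration theorem to obtain a homeomorphism of $D$ carrying $\mu_1$ to $\mu_2$; by the extension theorem one may arrange the boundary values to be the identity on $\partial D$, then extend by the identity on $S^2\setminus D$. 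The Jordan separation argument and the distinct-weight argument of Lemma~\ref{lem_jordan_oxtoby} then give $h_1(D_1)=\bar D_1$ and $h_1\in\homeo_{\lambda}(S^2)$, supported on $D$.

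Next, set $D_2':=h_1(D_2)$, which is a good disk contained in $D\setminus\bar D_1$ (by disjointness of $D_1,D_2$) and satisfies $\lambda(D_2')=\lambda(\bar D_2)$. The second step is to find $h_2\in\homeo_{\lambda}(S^2)$ supported on $D\setminus\inter(\bar D_1)$ with $h_2(D_2')=\bar D_2$; then $h:=h_2\circ h_1$ fixes $\bar D_1$ setwise (in fact pointwise, in step two) and carries $D_1,D_2$ to $\bar D_1,\bar D_2$ respectively. To build $h_2$ I would work inside the annulus $A:=D\setminus\inter(\bar D_1)$. One clean way is to choose a good arc $\alpha\subset A$ connecting a point of $\partial D$ to a point of $\partial\bar D_1$ and disjoint from $D_2'\cup\bar D_2$ (available by genericity of good arcs in the Hausdorff topology, since $D_2'\cup\bar D_2$ has positive area but is contained in a proper compact subset of $\inter(A)$). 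Cutting along $\alpha$ turns $A$ into a good topological disk $\widetilde A$ in which $D_2'$ and $\bar D_2$ sit as disjoint good subdisks with equal area, and the disk version of Lemma~\ref{lem_jordan_oxtoby} established above now yields an area-preserving self-homeomorphism of $\widetilde A$ fixing $\partial\widetilde A$ pointwise and sending $D_2'$ to $\bar D_2$; gluing back along $\alpha$ and extending by the identity on $\bar D_1$ and on $S^2\setminus D$ produces $h_2$.

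The main obstacle is making the second step legitimately area-preserving across the "seam" $\alpha$ and across $\partial\bar D_1$. This is precisely where the Oxtoby--Ulam extension theorem carries the argument: since both sides of $\alpha$ (and $\partial\bar D_1$) are assigned prescribed identity boundary values, and $\alpha$, $\partial\bar D_1$, $\partial D$ are all good (measure zero), the glued map is a bijective continuous map preserving Lebesgue measure on the complement of a null set, hence an element of $\homeo_{\lambda}(S^2)$. A small verification is needed to check that $D_2'$ and $\bar D_2$ can simultaneously be placed in the interior of $\widetilde A$ disjointly from $\alpha$; this is where the freedom in choosing $\alpha$ among good arcs is used, and is the only real geometric input beyond the two Oxtoby--Ulam theorems.
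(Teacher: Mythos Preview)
Your argument is correct and rests on the same Oxtoby--Ulam toolkit as the paper, but the organization differs. The paper handles both pairs $(D_i,\bar D_i)$ in a single pass: it invokes the weighted-measure trick of Lemma~\ref{lem_jordan_oxtoby} once (now with several distinct densities, so that $D$, $D_1$, $D_2$ are all separated) to produce $g\in\homeo_\lambda(S^2)$ with $g(D)=D$ and $g(D_i)=\bar D_i$ simultaneously, and then corrects $g$ to the identity near $\partial D$ by a single annular interpolation using the extension theorem. Your route instead iterates a single-disk version: first move $D_1$ to $\bar D_1$ inside $D$, then slit the remaining annulus $D\setminus\inter(\bar D_1)$ along a good arc $\alpha$ and move $h_1(D_2)$ to $\bar D_2$ inside the resulting disk. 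Your approach is more modular and only ever needs the two-region weighted measure; the paper's is shorter and avoids the cut-and-glue along $\alpha$. One small remark: the step where you say ``by the extension theorem one may arrange the boundary values to be the identity on $\partial D$'' is precisely the annular correction that the paper spells out explicitly, so you are tacitly invoking the same mechanism the paper uses at the end.
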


\begin{proof}
First, applying the same argument as in Lemma~\ref{lem_jordan_oxtoby}, we can find a homeomorphism $g \in \homeo_{\lambda}(S^2)$
with the property that $g(D) = D$ and $g(D_i) = \bar{D}_i$ with $i=1,2$. Furthermore, we can choose a good thin annulus $A \subset D$,
with boundary curves $\beta_1$ and $\beta_2$ such that the outer boundary curve $\beta_2$ of $A$ coincides with the boundary curve of $D$, 
and such that $g$ fixes the annulus. Now, applying the extension theorem, we can find an area-preseving homeomorphism $h_0 \colon A \ra A$, 
such that $h_0$ extends the boundary values of $h$ on the inner boundary curve $\gamma_1$ induced by $h$ and extends as the identity of the 
outer boundary curve $\gamma_2$. Now define $h \in \homeo_{\lambda}(S^2)$ that equals $g$ on $D \setminus A$, and equals $h_0$ on $A$ and 
equals the identity on the rest of the sphere, we obtain the desired homeomorphism, as required.
\end{proof}

\begin{lem}\label{lem_jordan_oxtoby_4}
Given $\delta>0$, there exists a good disk $D \subset S^2$ of area less than $\delta$, and there exists $h \in G$ with support contained 
in $D$ with non-trivial area gradient.
\end{lem}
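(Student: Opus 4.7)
The plan is to fix any $h_0 \in G\setminus\homeo_\lambda(S^2)$, whose existence is guaranteed by the hypothesis $G\supsetneq\homeo_\lambda(S^2)$, and to conjugate a well-chosen area-preserving element by $h_0$ to produce $h := h_0\phi h_0^{-1}\in G$, supported in a small good disk and inheriting the area distortion of $h_0$.

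First I would localize the distortion. I claim that for every $\eps>0$ there exists a good disk $D_1\subset S^2$ of diameter less than $\eps$ on which $h_0$ has non-trivial area gradient. Suppose not: every good disk $B$ of diameter less than some $\eps_0$ would carry a constant $c_B$ with $\meas(h_0(X))=c_B\meas(X)$ for every measurable $X\subset B$. Overlapping such disks force equality of their constants, so connectedness of $S^2$ gives one uniform constant $c>0$, and since small good disks generate the Borel $\sigma$-algebra of $S^2$, the identity $\meas(h_0(X))=c\meas(X)$ propagates to every Borel $X$. Taking $X=S^2$ yields $c=1$, so $h_0\in\homeo_\lambda(S^2)$, a contradiction. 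Using uniform continuity of $h_0$, I further arrange that $\meas(h_0(D_1))<\delta/2$.

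Next I would construct $\phi$. Let $\nu$ denote the Borel measure on $D_1$ defined by $\nu(X):=\meas(h_0(X))$; this is well-defined even without $h_0$ preserving null sets, because $h_0$ is a homeomorphism. The non-triviality of the area gradient on $D_1$ says that $\nu$ is not a constant multiple of $\meas|_{D_1}$. I would deduce the existence of good sub-disks $B_1,B_2\subset D_1$ with $\meas(B_1)=\meas(B_2)$ but $\nu(B_1)\neq\nu(B_2)$: indeed, if $\nu(B)$ depended only on $\meas(B)$ among good sub-disks, then subdividing $D_1$ into a large number of equal-area good sub-disks would pin each ratio $\nu(B_i)/\meas(B_i)$ to $c:=\nu(D_1)/\meas(D_1)$, and refining this cover would propagate $\nu = c\cdot\meas$ to all Borel subsets of $D_1$, contradicting non-trivial gradient. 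Lemma~\ref{lem_jordan_oxtoby_3} applied inside $D_1$ then produces $\phi\in\homeo_\lambda(S^2)$ supported in $D_1$ with $\phi(B_1)=B_2$.

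Finally, set $h := h_0\phi h_0^{-1}\in G$. Its support lies inside the compact topological disk $h_0(D_1)$, which I enclose in a good disk $D\subset S^2$ with $\meas(D)<\delta$ by thickening $h_0(D_1)$ along a smooth, hence good, Jordan curve and invoking outer regularity of $\meas$. Since $h$ is the identity on $\partial D$ and satisfies $h(D)=D$, non-trivial area gradient on $D$ is equivalent to $h$ not preserving area, which is confirmed by the direct computation
\[
\meas(h(h_0(B_1))) = \meas(h_0(\phi(B_1))) = \nu(B_2)\neq\nu(B_1) = \meas(h_0(B_1)).
\]
The step I expect to be the main obstacle is the localization of the non-trivial gradient to an arbitrarily small good disk, which forces the argument through the abstract pullback measure $\nu$ rather than via a density, so as to avoid any assumption that $h_0$ preserves $\meas$-null sets.
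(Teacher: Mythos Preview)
Your argument is correct and follows essentially the same route as the paper: localize the area distortion of some $h_0\in G\setminus\homeo_\lambda(S^2)$ to a small good disk, pick two equal-area sub-disks with unequal image-area, move one to the other by a compactly supported element of $\homeo_\lambda(S^2)$ via Lemma~\ref{lem_jordan_oxtoby_3}, and conjugate. Your explicit control of $\meas(h_0(D_1))$ via uniform continuity, together with the final verification $\meas(h(h_0(B_1)))\neq\meas(h_0(B_1))$, makes both the size bound on $D$ and the non-trivial area gradient more transparent than the paper's terser treatment.
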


\begin{proof}
Indeed, take a good disk $D_0$, there exists $g \in G$, such that the area gradient of $g$ on $D_0$ has non-trivial area gradient. 
If not, using the rotation group $\Rot(S^2)$ and covering the sphere by copies of $D_0$, this would imply that each homeomorphism in $G$
preserves area, as on each copy the area is multiplied by a constant factor, with the area multiplier being constant on overlaps, this multiplier 
has to be constant on the entire sphere and therefore to equal $1$. In particular, we can find two disks $D_1, D_2 \subset D_0$ of equal area 
such that the image disks have unequal area. Applying Lemma~\ref{lem_jordan_oxtoby_3} to construct an $h_0 \in \homeo_{\lambda}(S^2)$ 
supported on $D_0$ that interchanges the disks $D_1$ and $D_2$, taking the homeomorphism $h \in G$, defined by $gh_0g^{-1}$ 
supported on $D:=h(D_0)$, which we may assume is a good disk as well, yields the desired homeomorphism. Since $D$ can be made as small
as desired, the claim follows.
\end{proof}

\begin{proof}[Proof of Lemma~\ref{lem_elem_move}]
Let $D \subset S^2$ be a good disk of area $\delta$ and let $\eta_1, \eta_2$ be two good arcs. By Lemma~\ref{lem_jordan_oxtoby_2}, 
it suffices to move area from one subdisk of $D \setminus \eta_1$ to another, by homeomorphisms supported on $D$, as the 
arc $\eta_2$ can be uniformized into the desired shape given the area balance. In order to move area from one subdisk to the other, first, 
by Lemma~\ref{lem_jordan_oxtoby_4}, given $D$, there exists $h \in G$ 
supported on $D$ with non-trivial area gradient. In particular, we can find two good disks $B_1, B_2 \subset D$ of equal area, such 
that $h(B_1)$ has area larger than $B_1$ and $h(B_2)$ has area smaller than $B_2$. Taking the disks $B_1$ and $B_2$ sufficiently small and 
applying Lemma~\ref{lem_jordan_oxtoby_3}, we can place one disk $B_1$ in one component $D_1$ of $D \setminus \eta_1$ and $B_2$ in the other 
component of $D \setminus \eta_1$, where $h(B_1)$ is contained in the same component as $D \setminus \eta_1$ and $h(B_2)$ contained in the 
other component of $D \setminus \eta_1$.

To produce a definite area increase, say from $D_1$ to $D_2$, first apply $h$ and consider the area balance. Either the area of $h(D_2)$ is larger than
the area of $D_2$, or else after uniformizing the arc $h(\eta_1)$ by a homeomorphism $g$ supported on $D$ using Lemma~\ref{lem_jordan_oxtoby_2}, 
which we may assume is a good arc, the arc is contained in $D_1$. In this case, the inverse of $gh$ creates a mass transport of $D_1$ into $D_2$.
To make this area increase definite, if necessary, first again Lemma~\ref{lem_jordan_oxtoby_2} supported on $D$ such that $\eta_1$ is sent to 
an arc $\eta_0$, which is a modified version of $\eta_1$. Namely, cut two small segments out of $\eta_1$, cut out one small segment out of the 
boundary of both $B_1$ and $B_2$, and attach in both cases a thin tube between the segment taken from $\eta_1$ and the boundary of the disk, 
to obtain a modified curve $\eta_0$. Choosing the tubes succintly, the area of either component of $D \setminus \eta_0$ equals the area of $D_1$ 
and $D_2$ respectively, and there exists $g \in \homeo_{\lambda}(S^2)$ with support on $D$, such that $g(\eta_1) = \eta_0$. The composition $gh$ will now 
transport area in a definite way from $D_1$ to $D_2$, as required. To conclude the proof, note that this construction produces a definite amount of area transport on the one hand, 
but also, by choosing the perturbations smaller than the size of $D$ if necessary, the construction produces sufficiently small area transport, to produce any image arc, as required.
\end{proof}

\begin{proof}[Proof of Proposition~\ref{thm_area_max}]
Since every orientation preserving homeomorphism of the sphere is isotopic to the identity, first using a finite number of locally supported 
homeomorphisms from $\homeo_{\lambda}(S^2)$ moving the vertices of the warped grid $h(\Gamma(\epsilon))$, with $h$ the homeomorphism that needs to be approximated,
back into their original position, then use finitely many elementary moves constructed in Lemma~\ref{lem_elem_move} to move back all the arcs of the still warped grid 
back to the original grid $\Gamma(\epsilon)$. 
\end{proof}

Next, we prove the following.

\begin{prop}\label{thm_ant_homeo}
The arrow $\Ant_{\lambda}(S^2) \lra \homeo(S^2)$ is complete, that is, a closed group $G$ extending $\Ant_{\lambda}(S^2)$ containing homeomorphisms not preserving the 
antipodal action and not preserving area, equals $\homeo(S^2)$.
\end{prop}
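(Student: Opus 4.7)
The plan is to reduce to the two starred arrows $\Ant_{\lambda}(S^2) \to \Ant(S^2)$ and $\Ant(S^2) \to \homeo(S^2)$, both of which are already being established in the paper. It suffices to show $\Ant(S^2) \subseteq G$, for then the hypothesis that $G$ contains an element not preserving the antipodal action, combined with the complete arrow $\Ant(S^2) \to \homeo(S^2)$, forces $G = \homeo(S^2)$. By the complete arrow $\Ant_{\lambda}(S^2) \to \Ant(S^2)$, establishing $\Ant(S^2) \subseteq G$ further reduces to exhibiting a single element of $G \cap \Ant(S^2)$ that is not area-preserving.

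If the given area-breaking element of $G$ already lies in $\Ant(S^2)$ we are done, so we may assume every area-breaker $f \in G$ also breaks antipodal symmetry. Let $a \in \Ant_{\lambda}(S^2) \subseteq G$ denote the antipodal involution and write $f^{a} := afa \in G$. The key structural observation is the following symmetrization identity: if $h \in \homeo(S^2)$ has support contained in a good disk $D$ with $D \cap a(D) = \emptyset$, then the product $h h^{a}$ is supported on $D \cup a(D)$, commutes with $a$, equals $h$ on $D$, and equals $h^{a}$ on $a(D)$. Consequently $h h^{a} \in \Ant(S^2)$, and $h h^{a}$ fails to preserve area whenever $h$ does. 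The task therefore reduces to producing some $h \in G$ whose support lies in a small good disk $D$ disjoint from $a(D)$ and whose Jacobian on $D$ is not identically one.

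To produce such $h$, I would adapt the Oxtoby-Ulam localization argument from Lemma~\ref{lem_jordan_oxtoby_4}. Using rotations in $\Rot(S^2) \subseteq \Ant_{\lambda}(S^2)$ to tile the sphere by small good disks and invoking the hypothesis that $G$ contains an area-breaker, one first locates a small good disk $D_{0}$ contained in a hemisphere (so $D_{0} \cap a(D_{0}) = \emptyset$) on which some element of $G$ has non-trivial area gradient. To cut this element down to one supported on $D_{0}$ alone, I would compose with area-preserving pieces drawn from $\Ant_{\lambda}(S^2)$ via the Oxtoby-Ulam extension theorem, as in Lemmas~\ref{lem_jordan_oxtoby}--\ref{lem_jordan_oxtoby_3}, and use the antipodal-breaking element of $G$ to decouple the prescribed action on $D_{0}$ from the mirrored action on $a(D_{0})$ that antipodally symmetric elements of $\Ant_{\lambda}(S^2)$ would force.

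The main obstacle is precisely this localization-and-decoupling step. In the proof of Lemma~\ref{lem_jordan_oxtoby_4}, the ambient group $\homeo_{\lambda}(S^2)$ directly supplies single-disk-supported area-preserving homeomorphisms; in our setting only antipodally symmetric pairs supported on $D \cup a(D)$ are automatic, so the antipodal-breaking hypothesis on $G$ must be invoked precisely to break this forced symmetry. Once a $D_{0}$-supported area-breaker $h \in G$ has been produced, the symmetrization $h \mapsto h h^{a}$ immediately yields the desired antipodal-preserving area-breaker in $G \cap \Ant(S^2) \setminus \Ant_{\lambda}(S^2)$, and the proof closes via the two starred arrows cited at the outset.
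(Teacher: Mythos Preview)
Your reduction is circular in the paper's logical order: the arrow $\Ant(S^2)\to\homeo(S^2)$ (Proposition~\ref{thm_ant_max}) is not proved independently but is derived \emph{from} the very Proposition~\ref{thm_ant_homeo} you are trying to establish, via the inclusion $\Ant_{\lambda}(S^2)\subset\Ant(S^2)$. So you cannot invoke it here. (The other arrow you cite, $\Ant_{\lambda}(S^2)\to\Ant(S^2)$, is proved independently by passing to $\RP$, so that half of your reduction is legitimate.)

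The paper instead argues directly, in parallel with Proposition~\ref{thm_area_max}. The key input is Lemma~\ref{lem_local_variation}, which produces a single $h\in G$ that simultaneously breaks the antipodal symmetry (so that small neighbourhoods of $0$ and $\infty$ land in the same fundamental domain) and has non-trivial area gradient on one of them. Conjugating $\Ant_{\lambda}$-supported perturbations through this $h$ yields the elementary move of Lemma~\ref{lem_elem_move_ii}: an arc in a good disk $D_1$ can be pushed to any other arc with the same endpoints, at the price of an uncontrolled perturbation confined to a second ``trashcan'' disk $D_2$ in the same hemisphere. A grid argument with a diagonal limit to kill the trashcan error then gives $G=\homeo(S^2)$.

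Your symmetrization $h\mapsto hh^{a}$ is a pleasant device, and the reduction to finding a small-disk-supported area-breaker in $G$ is correct. But you yourself identify that localization as the main obstacle, and your sketch for it (``adapt Lemma~\ref{lem_jordan_oxtoby_4}, use the antipodal-breaker to decouple $D_0$ from $a(D_0)$'') is precisely the content of Lemma~\ref{lem_local_variation} and the proof of Lemma~\ref{lem_elem_move_ii}. So even setting aside the circularity, your route does not avoid the work the paper does---it repackages it and then spends it a second time via the factorization through $\Ant(S^2)$.
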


The difference with the case $\homeo_{\lambda}(S^2) \lra \homeo(S^2)$ is that now the small perturbation domains have to be constructed with the more restricted 
group $\Ant_{\lambda}(S^2)$, rather than $\homeo_{\lambda}(S^2)$. Let $\Ant_{\lambda}(S^2) \subset G \subseteq \homeo(S^2)$, where $G$ is not contained in 
either $\homeo_{\lambda}(S^2)$ or $\Ant(S^2)$. Take a homeomorphism $h \in \homeo(S^2)$, which we may again assume preserves Lebesgue null sets, 
and choose $\epsilon>0$. By a finite composition of elementary moves in $G$, we move the warped grid $h(\Gamma(\epsilon))$ back to the original grid $\Gamma(\epsilon)$, 
for each $\epsilon$, as before. The idea of the proof is that, we can construct perturbations supported on two small disks, both contained in one fundamental domain of $\R\PPP^2$ 
of the sphere. One disk acts as a good perturbation disk and the other disk acts as a trashcan disk supporting the bad perturbation.

\begin{lem}[Elementary move]\label{lem_elem_move_ii}
For each sufficiently small $\delta>0$, and for each pair of disjoint good disks $D_1, D_2 \subset S^2$ in the same fundamental domain of $\R\PPP^2$ and of area at most $\delta$,
and for each pair of good arcs $\eta_1, \eta_2 \subset D_1$ having the same endpoints in $\partial D_1$, there exists $g \in G$ with the property that $g$ is supported on 
$D_1 \cup D_2$, and $g(\eta_1) = \eta_2$.
\end{lem}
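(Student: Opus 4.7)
My plan is to mirror the proof of Lemma~\ref{lem_elem_move} in the antipodal setting, using $D_2$ as an auxiliary disk that absorbs the antipodal shadow of homeomorphisms built from $\Ant_{\lambda}(S^2)$. The decisive new ingredient is an analogue of Lemma~\ref{lem_jordan_oxtoby_4}: produce $g_0 \in G$ whose support is contained in $D_1 \cup D_2$ (not in a single topological disk) and which has non-trivial area gradient on $D_1$. Once $g_0$ is available, the remainder of the construction proceeds along the same lines as in the area-preserving case, with the auxiliary Lemmas~\ref{lem_jordan_oxtoby}--\ref{lem_jordan_oxtoby_3} replaced by their antipodally equivariant counterparts. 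These equivariant counterparts are obtained by observing that the Oxtoby--Ulam integration and extension theorems carry over verbatim for antipodally symmetric Oxtoby--Ulam measures, so that any two good disks $D, D'$ of equal area in the same fundamental domain of $\R\PPP^2$ are joined by an element of $\Ant_{\lambda}(S^2)$ supported on $(D \cup -D) \cup (D' \cup -D')$; the analogous statements for positioning sub-disks and uniformizing arcs follow equally.

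The main obstacle is the construction of $g_0$. Since $G \not\subseteq \Ant(S^2)$ and $G \not\subseteq \homeo_{\lambda}(S^2)$, we can pick $f \in G$ that neither commutes with the antipodal involution $p \mapsto -p$ nor preserves Lebesgue measure on some small good disk. Using the transitivity of $\Ant_{\lambda}(S^2)$ on good disks of equal area, I would conjugate $f$ by elements of $\Ant_{\lambda}(S^2)$ to arrange a modified $f' \in G$ satisfying $f'(D_1) \subset D_1$ and $f'(-D_1) \subset D_2$, which is possible precisely because $-D_1$ and $D_2$ sit in disjoint fundamental domains and $f$ fails to be antipodally equivariant. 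Then, choosing $A \in \Ant_{\lambda}(S^2)$ supported on $D_1 \cup -D_1$ that redistributes area non-trivially between two sub-disks of $D_1$ (with the forced antipodal copy on $-D_1$), the conjugate $f' A f'^{-1}$ has support in $D_1 \cup D_2$. A final $\Ant_{\lambda}$-correction, supported in $D_1 \cup -D_1 \cup D_2 \cup -D_2$, cancels any residual component outside $D_1 \cup D_2$ while leaving the non-trivial area gradient on $D_1$ intact, producing $g_0$.

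With $g_0$ in hand, the proof concludes exactly as in Lemma~\ref{lem_elem_move}: the equivariant version of Lemma~\ref{lem_jordan_oxtoby_3} lets us place two good sub-disks $B_1, B_2 \subset D_1$ of equal area on opposite sides of $\eta_1$; conjugating $g_0$ by suitable $\Ant_{\lambda}$-homeomorphisms transports a definite amount of area from $B_1$ to $B_2$; and a final application of the equivariant version of Lemma~\ref{lem_jordan_oxtoby_2} uniformizes the resulting curve to $\eta_2$. The only bookkeeping to verify is that each intermediate homeomorphism has support contained in $D_1 \cup D_2$, which is guaranteed because all $\Ant_{\lambda}$-manipulations are performed on $D_1 \cup -D_1$ and are subsequently composed with $g_0$ (and its conjugates), which redirect the antipodal shadow back into $D_2$.
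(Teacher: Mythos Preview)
Your overall architecture matches the paper's: build an element of $G$ with small support contained in two disks in the same fundamental domain and with non-trivial area gradient on one of them, then repeat the mass-transport argument of Lemma~\ref{lem_elem_move}. The paper does exactly this, with the disks arising as the $h$-images $V_0,V_\infty$ of an antipodal pair $U_0,U_\infty$, and the desired element obtained as $h\,g\,h^{-1}$ for $g\in\Ant_\lambda(S^2)$ supported on $U_0\cup U_\infty$.

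However, there is a genuine gap in your construction of $g_0$. You assert that one can pick $f\in G$ that is simultaneously non-antipodal and non-area-preserving, and then set up $f'$ so that $f'(D_1)\subset D_1$ and $f'(-D_1)\subset D_2$. That part is fine. But for $f'Af'^{-1}$ to have non-trivial area gradient on $D_1$ you need $f'$ itself to have non-trivial area gradient on $D_1$, equivalently $f$ to have non-trivial area gradient on the small disk $E$ around the point $p$ where $f$ fails to be antipodal. Nothing in your argument forces this: a priori $f$ could be locally area-scaling by a constant on all of the (open) set where it is non-antipodal, and genuinely area-distorting only on the set where it \emph{is} antipodal. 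In that case $f'|_{D_1}$ scales area by a constant $c$, $A$ is area-preserving, and $f'Af'^{-1}$ is area-preserving on its support, so your $g_0$ is useless.

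This is precisely the obstacle that the paper isolates as Lemma~\ref{lem_local_variation}: one must show that the non-antipodal behaviour and the non-trivial area gradient can be made to occur at the \emph{same} antipodal pair of small neighbourhoods. The paper's proof is a contradiction/covering argument: if every $g\in G$ had constant area multiplier near every antipodal pair sent to a non-antipodal pair, one bootstraps (by composing with a non-antipodal element) to the same conclusion at pairs sent to antipodal pairs, and then a finite-cover constancy argument forces every $g$ to be globally area-preserving, contradicting $G\not\subseteq\homeo_\lambda(S^2)$. Your proposal needs this lemma (or an equivalent) inserted before the construction of $f'$; once you have it, the rest of your outline coincides with the paper's proof.
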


\begin{figure}[h]
\begin{center}
\psfrag{p_2}{$q$}
\psfrag{p_1}{$\infty$}
\psfrag{p_0}{$0$}
\psfrag{g_1}{$\eta_1$}
\psfrag{g_2}{$\eta_2$}
\psfrag{h_0}{$g$}
\psfrag{D}{$D_1$}
\psfrag{D_1}{$D_2$}
\includegraphics[scale=0.30]{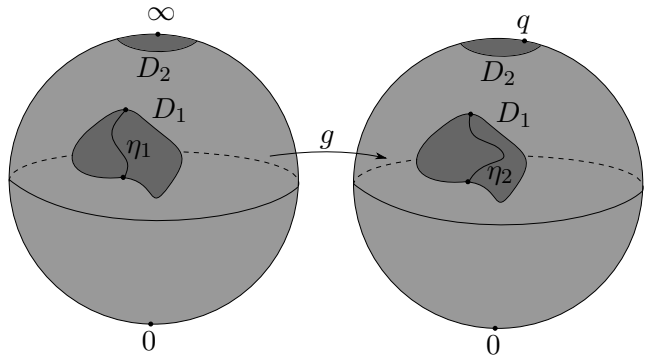}
\caption{Elementary move of Lemma~\ref{lem_elem_move_ii}.}
\label{fig_lemma_elem_move}
\end{center}
\end{figure}

\begin{lem}\label{lem_local_variation}
There exists $h \in G$ with $h(\infty) = \infty, h(0) \neq 0$ and such that for all sufficiently small neighborhoods $U_0, U_{\infty} \subset S^2$ of $0, \infty$ respectively, 
we have that $V_0 := h(U_0)$ and $V_{\infty} := h(U_{\infty})$ do not contain antipodal points and the area gradient on at least one of $U_0$ or $U_{\infty}$ is non-trivial.
\end{lem}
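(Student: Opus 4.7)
The plan is to exploit the two failure hypotheses on $G$ separately and then combine them, using throughout that $\Rot(S^{2}) \subset \Ant_{\lambda}(S^{2}) \subset G$.

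First I would normalize a non-antipodal element. Since $G \not\subset \Ant(S^{2})$, pick $f \in G$ and $p \in S^{2}$ with $f(-p) \neq -f(p)$, write $q = f(p)$ and $q' = f(-p) \neq -q$, and choose rotations $r_{1}, r_{2} \in \Rot(S^{2})$ with $r_{1}(\infty) = p$ and $r_{2}(q) = \infty$, so that $r_{2}(-q) = 0$. The element $h_{1} := r_{2} \circ f \circ r_{1} \in G$ then satisfies $h_{1}(\infty) = \infty$ and $h_{1}(0) = r_{2}(q') \neq 0$. If $h_{1}$ already has non-trivial area gradient on every small neighborhood of $0$ or of $\infty$, I would simply take $h := h_{1}$.

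Otherwise $h_{1}$ scales area by constants $c_{0}$ and $c_{\infty}$ on fixed neighborhoods of $0$ and $\infty$, and I would import an area defect via $G \not\subset \homeo_{\lambda}(S^{2})$. Pick $f_{\lambda} \in G \setminus \homeo_{\lambda}(S^{2})$; a connectedness argument on $S^{2}$ shows that some point $p_{*}$ carries non-trivial area gradient of $f_{\lambda}$ on every neighborhood, for otherwise the local scaling factor would be a global constant equal to $1$, forcing area-preservation. Choosing $r \in \Rot(S^{2})$ with $r(p_{*}) = \infty$, the conjugate $g_{0} := r f_{\lambda} r^{-1} \in G$ has non-trivial area gradient on every neighborhood of $\infty$. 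Post-composing with a rotation $\rho \in \Rot(S^{2})$ sending $g_{0}(\infty)$ back to $\infty$, and, if necessary, a further rotation $\tau$ about the $0$--$\infty$ axis, I would produce $g := \tau \rho g_{0} \in G$ with $g(\infty) = \infty$ and $g(0) \neq h_{1}^{-1}(0)$; non-triviality of the area gradient at $\infty$ is preserved since rotations are isometries, and the latitude-circle freedom in $\tau$ lets us avoid the single point $h_{1}^{-1}(0)$.

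Setting $h := h_{1} \circ g \in G$ then gives $h(\infty) = \infty$ and $h(0) = h_{1}(g(0)) \neq 0$. For $X$ contained in a sufficiently small neighborhood $U_{\infty}$ of $\infty$, $g(X)$ lies in the region where $h_{1}$ scales area by $c_{\infty}$, so $\meas(h(X)) = c_{\infty}\, \meas(g(X))$, and since $\meas(g(X))/\meas(X)$ varies with $X$, so does $\meas(h(X))/\meas(X)$. Thus $h$ has non-trivial area gradient on every small enough neighborhood of $\infty$. The condition that $V_{0}$ and $V_{\infty}$ contain no antipodal pair is automatic: $V_{0}$ clusters near $h(0) \neq 0$ while $-V_{\infty}$ clusters near $0$, so the two are disjoint once the neighborhoods $U_{*}$ are taken small enough.

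The hard part will be ruling out a cancellation in which the multiplicative area scalings of $h_{1}$ and $g$ near $\infty$ conspire to a constant. The case split between $h = h_{1}$ and $h = h_{1} \circ g$ is precisely designed to avoid this: in the subcase where we use $h_{1} \circ g$, the factor $h_{1}$ contributes only the constant $c_{\infty}$ near $\infty$, so the non-constancy brought by $g$ survives unimpaired in the composition.
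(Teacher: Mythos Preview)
Your construction is correct and reaches the same conclusion, but the route differs from the paper's. The paper argues by contradiction: assuming no $h$ of the required form exists, it deduces that for \emph{every} $g\in G$ and every antipodal pair $\{p,-p\}$ sent to non-antipodal images, the area multiplier is locally constant near $p$ and $-p$; it then propagates this to pairs with antipodal images by composing with an element that breaks antipodality, and concludes via a covering argument that the local multiplier is globally constant, hence equal to $1$, forcing $G\subseteq\homeo_{\lambda}(S^{2})$ and a contradiction. By contrast, you build $h$ directly: first produce $h_{1}$ with $h_{1}(\infty)=\infty$, $h_{1}(0)\neq 0$ from the hypothesis $G\not\subset\Ant(S^{2})$, and then, if $h_{1}$ happens to have constant area multiplier near both poles, compose with a rotated copy of a non-area-preserving element to inject a non-trivial area gradient at $\infty$.

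Both arguments use the same local-to-global principle (a locally constant area multiplier on $S^{2}$ must be identically~$1$), but they invoke it at different places: the paper applies it to an arbitrary $g\in G$ at the end of the contradiction, whereas you apply it only to locate a single point $p_{*}$ where $f_{\lambda}$ fails to have constant multiplier. Your version has the merit of being constructive and of making transparent which hypothesis is responsible for which feature of $h$; the paper's version is shorter and avoids the case split and the bookkeeping with $\tau$ needed to ensure $g(0)\neq h_{1}^{-1}(0)$. One small point worth making explicit in your write-up: when you take $h=h_{1}\circ g$ you should note that for sufficiently small $U_{\infty}$ one has $g(U_{\infty})$ contained in the fixed neighborhood of $\infty$ on which $h_{1}$ scales area by the constant $c_{\infty}$; you use this implicitly but it is the reason the composition inherits the non-trivial gradient from $g$.
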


\begin{proof}
Suppose not, then for each $g \in G$ and for each pair $\{p,-p\} \in S^2$ mapped to $q_1$ and $q_2 \neq -q_1$, there exist neighborhoods $U_{p}$ and $U_{-p}$ 
such that the Lebesgue measure of each measurable set in each of these disks is multiplied by a constant depending only on the basepoint $p$ and $-p$. Passing 
to smaller $U_p$ and $U_{-p}$ if necessary, we may assume that $V_p \cap V_{-p} = \emptyset$, with $V_p := g(U_p)$ and $V_{-p} := g(U_{-p})$. 
Then the same conclusion must hold if $q_1$ and $q_2 = -q_1$ are antipodal. Indeed, if there would exist $h \in G$ and points $p,-p$ sent to $q,-q$ for which for 
arbitrarily small neighborhoods $U_{p}$ and $U_{-p}$ of $p$ and $-p$, small disks of equal area are sent to disks of differing area in the image $V_{p}$ and $V_{-p}$, 
then the composition $h g$ has the property of having non-trivial area gradient on a small neighborhood nested around the antipodal points whose images 
$W_p = hg(U_p)$ and $W_{-p} = hg(U_{-p})$ under $hg$ are by construction not antipodal in the sense that $-W_p \cap W_{-p} = \emptyset$. 
Now, take any $g \in G$ and consider the function that to $p \in S^2$ assigns the multiplier of the areas, which by the argument above exists for each $p \in S^2$. 
Since this function, by its construction, is constant on each small enough open neighborhood that exists for each point, and since finitely many of these neighborhoods cover 
the sphere by compactness, observing that the function has to be constant on overlaps, it has to be constant on the entire sphere, and thus 
equal to $1$. Thus $g \in \homeo_{\lambda}(S^2)$ and thus $G \subseteq \homeo_{\lambda}(S^2)$, a contradiction. 
\end{proof}

\begin{proof}[Proof of Lemma~\ref{lem_elem_move_ii}]
By Lemma~\ref{lem_local_variation}, there exists $h \in G$ with the properties that $h(\infty)=\infty$ and $h(0) = q \neq 0$ and arbitrarily small neighborhoods $U_0$ 
and $U_{\infty}$ containing $0$ and $\infty$ respectively, so that the images $V_0$ and $V_{\infty}$ under $h$ are disjoint, contained in one fundamental domain 
and $h$ distorts the area of $V_0$ non-trivially. Using elements of $\Ant_{\lambda}(S^2)$ that have support on $U_0$ and $U_{\infty}$, 
using that the area-distortion on $V_0$ is non-trivial and using the argument in the proof of Lemma~\ref{lem_elem_move}, then conjugating by elements of $\Ant_{\lambda}(S^2)$ 
to send the neighborhoods $V_0$ and $V_{\infty}$ to arbitrary domains in the same fundamental domain of the sphere $S^2$, one constructs homeomorphisms 
in $G$ that have support contained in domains with small area, and that preserve a good disk $D_1$ in one of these domains that can send one arc $\eta_1 \subset D_1$ to another 
arc $\eta_2 \subset D_2$ with the same endpoints. 
\end{proof}

\begin{proof}[Proof of Proposition~\ref{thm_ant_homeo}]
Using the elementary move of Lemma~\ref{lem_elem_move_ii}, we can make perturbations supported on two disks, one as small as desired compared to the other.
Namely, by decomposing a homeomorphism on a larger disk into finitely many homeomorphisms supported on smaller disks, fixing the trash can disk throughout, 
this can be constructed. Using the elementary moves, and applying a diagonal argument due to the arbitrarily small trashcan error, one moves the edges of the 
distorted grid $h(\Gamma(\epsilon))$ back into their original position, as in the previous case, to obtain $\Gamma(\epsilon)$, for each $\epsilon>0$, as required.
\end{proof}

Finally, we discuss several arrows to do with antipodal action groups.

\begin{prop}\label{thm_ant_ant}
The arrow $\Ant_{\lambda}(S^2) \lra \Ant(S^2)$ is complete.
\end{prop}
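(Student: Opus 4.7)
The plan is to adapt the proof of Proposition~\ref{thm_area_max} to the antipodally symmetric setting, essentially running the same grid-approximation scheme but carrying out every step inside $\Ant(S^2)$. The crucial simplifying observation is that since $G \subseteq \Ant(S^2)$, every homeomorphism in $G$ automatically commutes with the antipodal map, so any perturbation supported on a disk $D$ is forced to come paired with its antipodal image on $-D$. Thus, unlike the situation of Proposition~\ref{thm_ant_homeo}, no auxiliary "trashcan" disk has to be introduced by hand; the antipodal copy of the working disk plays that role automatically.

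First I would establish the antipodal analogues of Lemmas~\ref{lem_jordan_oxtoby}--\ref{lem_jordan_oxtoby_3}. Given a good disk $D \subset S^2$ small enough that $D \cap (-D) = \emptyset$, any area-preserving homeomorphism $h_0$ of $D$ that is the identity on $\partial D$ (produced by the Oxtoby--Ulam extension theorem) extends uniquely to an antipodally equivariant homeomorphism on $D \cup (-D)$, identity elsewhere, and this extension lies in $\Ant_\lambda(S^2) \subseteq G$. So the standard Jordan/Oxtoby--Ulam machinery is available on the disk pair $D \cup (-D)$ while staying inside $\Ant_\lambda(S^2)$; in particular, pairs of good arcs in $D$ with equal area balance can be interchanged by an element of $\Ant_\lambda(S^2)$ supported on $D \cup (-D)$.

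Next, I would prove the antipodal analogue of Lemma~\ref{lem_jordan_oxtoby_4}: there exists $h \in G$ supported on an arbitrarily small $D \cup (-D)$ having non-trivial area gradient on $D$. Pick $g \in G \setminus \Ant_\lambda(S^2)$; then by hypothesis $g$ has non-trivial area gradient on some good disk $D_0$, for otherwise, covering $\R\PPP^2$ by finitely many small disks (each lifted to an antipodally symmetric pair) and using that the area multiplier would be constant on overlaps, the multiplier would be globally $1$ and $g$ would lie in $\Ant_\lambda(S^2)$. One then localizes the non-trivial gradient exactly as in Lemma~\ref{lem_jordan_oxtoby_4}: inside $D_0$ find two good subdisks $B_1,B_2 \subset D_0$ of equal area whose $g$-images have unequal areas, interchange $B_1$ and $B_2$ by some $h_0 \in \Ant_\lambda(S^2)$ supported on $D_0 \cup (-D_0)$, and conjugate to produce $h := g h_0 g^{-1} \in G$ supported on $D := g(D_0)$ and $-D$, with non-trivial area gradient on $D$.

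With these two ingredients in place, the antipodal elementary move is the exact transcription of Lemma~\ref{lem_elem_move}: given a sufficiently small good disk $D$ with $D \cap (-D) = \emptyset$ and two good arcs $\eta_1,\eta_2 \subset D$ with common endpoints on $\partial D$, there exists $g \in G$ supported on $D \cup (-D)$ with $g(\eta_1) = \eta_2$ (and automatically $g(-\eta_1) = -\eta_2$). Using this, the proof concludes by approximation on an antipodally symmetric $\epsilon$-grid $\Gamma(\epsilon) \subset S^2$ (take meridians paired with their antipodes and latitudes symmetric about the equator). For $h \in \Ant(S^2)$, which by density of smooth or PL antipodal homeomorphisms may be assumed to preserve Lebesgue null sets, first move the vertices of $h(\Gamma(\epsilon))$ back into place by finitely many locally supported antipodal area-preserving homeomorphisms, then move the edges back with finitely many antipodal elementary moves. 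A diagonal argument over $\epsilon \to 0$ then produces $h$ as a uniform limit of elements of $G$, and closedness of $G$ gives $h \in G$.

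The main obstacle I expect is the antipodal localization in the gradient step: one must verify that the conjugated homeomorphism $gh_0 g^{-1}$ really lands inside $G$ (which it does, as a composition of elements of $G$) and that its support $D \cup (-D)$ can be made arbitrarily small by choosing $D_0$ small and pre-/post-composing with $\Ant_\lambda(S^2)$-elements that shrink the region while preserving the non-trivial area-gradient signature. Once that is arranged, everything else is a careful bookkeeping exercise parallel to the previous proposition.
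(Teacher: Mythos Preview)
Your approach is correct, but you are working much harder than necessary. The paper's proof is a single sentence: since $\Ant(S^2)$ is precisely the group of homeomorphisms of $S^2$ that descend to the quotient $\R\PPP^2$, and likewise $\Ant_\lambda(S^2)$ descends to $\homeo_\lambda(\R\PPP^2)$, the entire statement is equivalent to ``$\homeo_\lambda(\R\PPP^2)$ is maximal in $\homeo(\R\PPP^2)$,'' and the proof of Proposition~\ref{thm_area_max} applies verbatim on the closed surface $\R\PPP^2$.

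What you have written is exactly this quotient argument, manually lifted back to $S^2$: your disk pairs $D \cup (-D)$ are the preimages of disks in $\R\PPP^2$, your equivariant Oxtoby--Ulam extensions are the lifts of the ordinary ones on $\R\PPP^2$, and your antipodally symmetric grid is the lift of a grid on $\R\PPP^2$. So the two proofs are not genuinely different in content; the paper simply recognizes that all of your equivariant bookkeeping is encoded for free by passing to the quotient, whereas you reconstruct that bookkeeping by hand. Your version has the minor virtue of being self-contained on $S^2$, but the quotient formulation is both shorter and makes clear why no new idea beyond Proposition~\ref{thm_area_max} is needed.
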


\begin{proof}
This follows since these groups pass to the quotient to closed groups $\homeo_{\lambda}(\R \PPP^2)$ and $\homeo(\R\PPP^2)$ of the closed surface $\R \PPP^2$,
where the same argument as the proof of Proposition~\ref{thm_area_max}.
\end{proof}

Furthermore, the proof of Proposition~\ref{thm_ant_homeo} yields a proof of the following.

\begin{prop}\label{thm_ant_max}
The arrow $\Ant(S^2) \lra \Homeo(S^2)$ is complete, that is, the group $\Ant(S^2)$ is maximal in $\homeo(S^2)$.
\end{prop}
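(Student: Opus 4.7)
The plan is to follow the strategy of Proposition~\ref{thm_ant_homeo} almost verbatim, with $\Ant(S^2)$ now playing the role previously played by $\Ant_{\lambda}(S^2)$. Fix a closed group $G$ with $\Ant(S^2) \subsetneq G \subseteq \homeo(S^2)$ and pick $h \in G \setminus \Ant(S^2)$. As in the proof of Proposition~\ref{thm_area_max}, by density of null-set preserving homeomorphisms one may assume that the target homeomorphism $f \in \homeo(S^2)$ to be approximated preserves Lebesgue null sets, and again introduce the $\epsilon$-grid $\Gamma(\epsilon)$; it then suffices to show that the warped grid $f(\Gamma(\epsilon))$ can be moved back into $\Gamma(\epsilon)$ by elements of $G$, since a diagonal argument then yields density of $G$ in $\homeo(S^2)$.

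The first step is to prove the exact analogue of Lemma~\ref{lem_local_variation}: there exists $h \in G$ with $h(\infty)=\infty$, $h(0) \neq 0$, together with arbitrarily small neighborhoods $U_0, U_{\infty}$ of $0,\infty$ whose images $V_0 = h(U_0)$ and $V_{\infty}=h(U_{\infty})$ are disjoint and lie inside a single fundamental domain of $\R\PPP^2$. The argument is structurally identical to the area-multiplier argument of Lemma~\ref{lem_local_variation}: if the conclusion failed, then for every $g \in G$ and every antipodal pair $\{p,-p\}$ small neighborhoods would have antipodally compatible images, and one assigns to each $p\in S^2$ a locally defined ``antipodal defect,'' which is locally constant by the same neighborhood-multiplier argument; compactness and connectedness of $S^2$ then force this defect to be trivial everywhere, giving $G \subseteq \Ant(S^2)$, contradicting the choice of $h$.

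With this $h$ in hand, the second step is the analogue of Lemma~\ref{lem_elem_move_ii}: for disjoint good disks $D_1, D_2$ lying in a common fundamental domain of $\R\PPP^2$ and good arcs $\eta_1,\eta_2 \subset D_1$ with common endpoints in $\partial D_1$, produce $g \in G$ supported on $D_1 \cup D_2$ with $g(\eta_1) = \eta_2$. As in Lemma~\ref{lem_elem_move_ii}, one conjugates $h$ by elements of $\Ant(S^2)$ to relocate the pair $(V_0, V_{\infty})$ to the disks $(D_1,D_2)$ and uses $D_2$ as a trashcan to absorb the antipodal image of the perturbation engineered on $D_1$; the shaping of arcs is accomplished by $\Ant(S^2)$-moves supported on $D_1 \cup -D_1 \cup D_2 \cup -D_2$ combined with the Oxtoby-Ulam extension theorem applied on the quotient $\R\PPP^2$. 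Note that the absence of the area constraint makes this step genuinely simpler than the corresponding step in Proposition~\ref{thm_ant_homeo}, since no area-balance of $\eta_1,\eta_2$ is required and Lemmas~\ref{lem_jordan_oxtoby_2}--\ref{lem_jordan_oxtoby_3} simplify to their non-measure-theoretic analogues on $\R\PPP^2$.

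The third step mirrors the conclusion of Proposition~\ref{thm_ant_homeo}: move the vertices of the warped grid $f(\Gamma(\epsilon))$ back to their original positions by finitely many locally supported antipodal-equivariant moves in $\Ant(S^2)$ (available through Proposition~\ref{thm_ant_ant} pulled back from $\R\PPP^2$), then apply finitely many elementary moves from the second step to straighten the remaining edges; since the trashcan disk $D_2$ can be chosen arbitrarily small relative to $D_1$, a diagonal argument sends the trashcan error to zero and produces the required approximation. The main obstacle is the first step: encoding the ``antipodal defect'' as an object whose local constancy propagates globally is slightly more delicate than the real-valued area-multiplier argument of Lemma~\ref{lem_local_variation}, but once this is formulated correctly (for instance, as a section of an appropriate bundle over $\R\PPP^2$) the remainder of the argument is a direct transcription of the proof of Proposition~\ref{thm_ant_homeo}.
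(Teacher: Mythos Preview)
Your approach is correct in outline, but it is far more elaborate than necessary and misses the paper's key observation. The paper's proof is a single line: since $\Ant_{\lambda}(S^2) \subset \Ant(S^2) \subset G$, and since $G$ contains both a homeomorphism not commuting with the antipodal map (because $G \supsetneq \Ant(S^2)$) and a homeomorphism not preserving area (any element of $\Ant(S^2) \setminus \Ant_{\lambda}(S^2)$ will do), the hypotheses of Proposition~\ref{thm_ant_homeo} are satisfied verbatim, and that proposition immediately gives $G = \homeo(S^2)$. No new elementary moves, no new grid argument, and no ``antipodal defect'' are needed.

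Your proposal instead rebuilds the entire perturbation machinery of Proposition~\ref{thm_ant_homeo} with $\Ant(S^2)$ in place of $\Ant_{\lambda}(S^2)$. This does work, and as you note it is in places genuinely simpler because the area-balance constraints disappear. However, you misidentify the difficulty: what you call ``the main obstacle''---producing $h \in G$ with $h(\infty)=\infty$, $h(0)\neq 0$, and small neighbourhoods with non-antipodal images---is in fact trivial. Pick any $h \in G \setminus \Ant(S^2)$; by definition there is a $p$ with $h(-p) \neq -h(p)$, and after pre- and post-composing with rotations you may take $p=\infty$ and $h(\infty)=\infty$, whence $h(0) \neq 0$. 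Small enough $U_0, U_\infty$ then automatically have images lying in a single fundamental domain. The locally-constant ``antipodal defect'' argument you sketch (and flag as delicate) is therefore unnecessary, and in any case is not clearly formulated: unlike the real-valued area multiplier in Lemma~\ref{lem_local_variation}, there is no obvious scalar invariant to propagate. So while your route reaches the destination, the paper's route is both shorter and avoids the one step you yourself identify as problematic.
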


\begin{proof}
Since $\Ant_{\lambda}(S^2)$ is contained in $\Ant(S^2)$, the result follows from Proposition~\ref{thm_ant_homeo}.
\end{proof}

\subsection{Proof of Theorem A}

The proof consists in summarizing the above results. To wit, completeness of the arrows
\[ \Rot(S^2) \lra \Mobius(S^2) ~\tu{and}~ \Rot(S^2) \lra \Lin(S^2) \] 
is proved in Proposition~\ref{prop_rot_lin} and Proposition~\ref{prop_rot_mob_max}. where maximality of $\Ant(S^2)$ in $\homeo(S^2)$ 
follows from Proposition~\ref{thm_ant_max}. Completeness of the arrows 
\[ \homeo_{\lambda}(S^2) \lra \homeo(S^2), ~\Ant_{\lambda}(S^2) \lra \Ant(S^2)~\tu{and}~\Ant_{\lambda}(S^2) \lra \homeo(S^2) \] 
is proved in Proposition~\ref{thm_area_max}, Proposition~\ref{thm_ant_homeo}, and Proposition~\ref{thm_ant_ant}, respectively. 
The following set of intersections are all trivial, in the sense that their intersection equals the rotation group $\Rot(S^2)$. These are
\[ \Mobius(S^2) \cap \Lin(S^2), ~\Mobius(S^2) \cap \homeo_{\lambda}(S^2), ~\Mobius(S^2) \cap \Ant_{\lambda}(S^2) \]
and
\[ \Mobius(S^2) \cap \Ant(S^2), ~ \Lin(S^2) \cap \Ant_{\lambda}(S^2), ~ \Lin(S^2) \cap \Homeo_{\lambda}(S^2) \]
Indeed, these claims follow from combining Lemma~\ref{lem_conf_lin}, stating that $\Ant(S^2) \cap \Mobius(S^2) = \Rot(S^2)$, with the inclusions $\Lin(S^2) \subset \Ant(S^2)$ 
and $\Ant_{\lambda}(S^2) \subset \Ant(S^2)$, with Lemma~\ref{lem_area_linear} stating that $\Lin(S^2) \cap \Homeo_{\lambda}(S^2) = \Rot(S^2)$, with the 
inclusion $\Ant_{\lambda}(S^2) \subset \Homeo_{\lambda}(S^2)$, and that $\Mobius(S^2) \cap \homeo_{\lambda}(S^2) = \Rot(S^2)$ was proved 
in Lemma~\ref{lem_mob_area_intersect}. This completes the proof of Theorem A.

\section{The case of higher genus surfaces}

In this final section we list transformation groups on higher genus surfaces where the topology of the surface obstructs the finite-dimensional
actions of the sphere, but allows for mapping class groups to act non-trivially. 

\subsection{Homeomorphism groups of the torus}

Denoting $\homeo(\T^2)$ the full group of torus homeomorphisms, with $\T^2$ the torus equipped with the Euclidean metric, in a way similar to the case of the sphere $S^2$, 
a group $G \subset \homeo(\T^2)$ is said to be homogeneous if $G$ contains the translation group $\Trans(\T^2)$. In terms of the above described Lie groups, let us now 
consider the subgroups of the homeomorphism group of the torus $\T^2$. Denote $\homeo_{\Id}(\T^2) \subset \homeo(\T^2)$ the subgroup of homeomorphisms homotopic 
to the identity on the torus.

\subsubsection{Holomorphic mappings}

The group of holomorphic mappings of $\T^2$, or alternatively the group of isometries of $\T^2$, equals the translation group $\Trans(\T^2)$, 
analogous to the rotation group $\Rot(S^2)$ of the sphere $S^2$. This group is minimal in $\homeo(\T^2)$ as a closed and transitive group.

\subsubsection{Invariant foliations}

Consider a group $G \subset \homeo(\T^2)$ that preserves a foliation of essential simple closed curves in the torus $\T^2$, which after conjugation
we may suppose to be the foliation homeomorphic to the horizontal foliation. In this case, $G$ is conjugate to the group $\Skew(\T^2)$ of skew-product 
homeomorphisms of the form $g(x,y) = (\varphi(x), \psi(x,y))$, with $\varphi \colon \mathbb{S}^1 \ra \mathbb{S}^1$ and $\psi \colon \T^2 \ra \mathbb{S}^1$,
where the genuinely horizontal foliation is preserved. In case $G$ leaves invariant two transverse foliations by essential simple closed curves, then $G$ is 
conjugate to the group $\Prod(\T^2)$ of product homeomorphisms of the form $g(x,y) = (\varphi(x), \psi(y))$. In case, the group $G$ leaves invariant three 
pairwise transversal foliations, each of which is a foliation by essential simple closed curves, is conjugate to the translation group $\Trans(\T^2)$.

\subsubsection{Area-preserving homeomorphisms} 

The group $\homeo_{\lambda, \Id}(\T^2) \subset \homeo_{\Id}(\T^2)$ of area-preserving homeomorphisms homotopic to the identity, is a 
closed and transitive subgroup of $\homeo_{\Id}(\T^2)$ and is a maximal subgroup of $\homeo_{\Id}(\T^2)$, as it is maximal in the case of the sphere
by Theorem A.

\subsubsection{Mapping class group} 

Denote $H \subset \MCG(\T^2)$ a proper subgroup of the mapping class group. Define $\homeo_H(\T^2) \subset \homeo(\T^2)$ the subgroup of homeomorphisms 
whose homotopy classes lie in the subgroup $H \subset \MCG(\T^2) \cong \SL(2, \Z)$ that represents $\MCG(\T^2)$ by area-preserving homeomorphisms. Each 
such group $\homeo_{H}(\T^2)$ is a proper closed and transitive subgroup of $\homeo(\T^2)$ due to global topological obstructions. 

\subsubsection{Finite degree covers} 

Define $\homeo_{n,m}(\T^2) \subset \homeo(\T^2)$ the subgroup of homeomorphisms commuting with a finite degree $(n,m)$ regular covering action of the torus, 
defined as follows. Given a homeomorphism $h \in \homeo(\T^2)$, with $\T^2 = \R^2 \slash \Z^2$, it naturally induces a homeomorphism $h_{n,m}$ on the torus 
$\T^2_{n,m} := \R^2 \slash (n\Z \times m\Z)$, which itself is homeomorphic to $\T^2$. Each closed and transitive subgroup of $\homeo(\T^2)$ of homeomorphisms 
that commute with respect to each such non-trivial pair $(n,m) \neq (1,1)$ defines a proper closed and transitive subgroup of $\homeo(\T^2)$. Concrete examples 
are the product groups 
\begin{equation}
G_{n,m} := \homeo_{n}(\mathbb{S}^1) \times \homeo_m(\mathbb{S}^1),
\end{equation}
with $\homeo_k(\mathbb{S}^1)$ the closed and transitive group of circle homeomorphisms commuting with rotations of finite order $k \in \N$. 

\subsection{Homeomorphism groups of higher genus surfaces}

In the case of the homeomorphism group $\homeo(S)$ of a genus at least two, several interesting closed transitive subgroups arise, in particular those 
arising as subgroups of the mapping class group and regular finite degree covering actions. There is a moduli space of hyperbolic metrics on a given topological 
surface of genus at least two. A hyperbolic metric can be chosen on a surface of genus at least two, a different choice of hyperbolic metric gives rise to conjugate 
homeomorphism groups. Furthermore, unlike the sphere and torus, a closed surface $S$ of genus at least two admits only a finite group of isometries, and consequently 
this group is not transitive.

\subsubsection{Area-preserving mappings} 

The group $\homeo_{\lambda}(S)$ of area-preserving homeomorphisms, with the subgroup of all area-preserving homeomorphisms isotopic to the identity being a 
maximal subgroup of the group $\homeo_{\Id}(S)$ of homeomorphisms of $S$ isotopic to the identity, and where every homotopy class of a homeomorphism 
admits an area-preserving representative creating the full group $\homeo_{\lambda}(S)$ of area-preserving homeomorphisms.

\subsubsection{Mapping class groups} 

Similar to the torus case, denote $H \subset \MCG(S)$ a subgroup of the full mapping class group $\MCG(S)$ of a closed surface, see~\cite{FM} for a good exposition. 
Write $\homeo_H(S) \subset \homeo(S)$ the subgroup of homeomorphisms whose homotopy classes lie in the subgroup $H \subset \MCG(S)$. 
The group $\homeo_{H}(S) \subset \homeo(S)$ is a closed and transitive group~\cite{fisher} that is a proper subgroup of $\homeo(S)$ due to global topological obstructions. 
First, by a famous result of Kerckhoff~\cite{kerck} (the Nielsen realization problem), every finite subgroup of $\MCG(S)$ is realized by a finite group of conformal homeomorphisms 
of the surface relative to some hyperbolic metric on $S$, which by Teichm\"uller theory is not unique, and hence the group $\Fin(S) \subset \MCG(S)$ of all finite order isotopy classes
is induced by conformal homeomorphisms. For example one can take $\Fin(S)$ and add a single Dehn-twist to generate a larger group. Or one can take the Torelli subgroup 
$\Torelli(S) \subset \MCG(S)$ of isotopy classes acting trivially on homology and a single non-Torelli element, to produce larger subgroups. The question is to map the internal 
structure of the mapping class group by finding all possible relations between subgroups and showing completeness of these arrows.

\subsubsection{Finite degree covers} 

Given closed surfaces $S_1$ and $S_2$ of higher genus, consider a regular finite degree covering mapping $\pi \colon S_1 \ra S_2$ projecting the surface $S_2$ of genus $g_2$ 
onto another closed surface $S_1$ of genus $2 \leq g_1 \leq g_2$. Given a closed and transitive group $G_1 \subseteq \homeo(S_1)$, it lifts to its regular covering space to a 
group of homeomorphisms $G_2 \subset \homeo(S_2)$ that is also closed and transitive, and furthermore, it is a proper subgroup of $\homeo(S_2)$. 
Therefore, given the collection of all such regular covering mappings from a given surface $S$ to its quotient surfaces, combined with the closed and transitive subgroups of the 
homeomorphism group defined on these quotient surfaces, gives rise in a natural way to a list of closed and transitive proper subgroups of the homeomorphism group of the 
given surface $S$.

\end{document}